\newtheorem{theorem}{Theorem}[section]
\newtheorem{thm}[theorem]{Theorem}
\newtheorem{IND}[theorem]{Inherent Nondualisability Lemma}
\newtheorem{IC}[theorem]{IC Duality Theorem}
\newtheorem{cor}[theorem]{Corollary}
\newtheorem{remark}{Remark}
\newtheorem{problem}[theorem]{Problem}
\newcommand{\up}[1]{\textup{#1}}
\title{Natural dualities, nilpotence and projective planes} 
\author{Marcel Jackson}
\address{Department of Mathematics and Statistics, La Trobe University, VIC 3086, Australia}
\email{m.g.jackson@latrobe.edu.au}
\begin{document}
\begin{abstract} 
We use an interpretation of projective planes to show the inherent nondualisability of some finite semigroups.  The method is sufficiently flexible to demonstrate the nondualisability of (asymptotically) almost all finite semigroups as well as to give a fresh proof of the Quackenbush-Szab\'o result that any finite group with a nonabelian Sylow subgroup is nondualisable.  A novel feature is that the ostensibly different notions of nilpotence for semigroups, nilpotence for groups, and the property of being nonorthodox for a completely 0-simple semigroup are unified by way of a single construction.   We also give a semigroup example of two dualisable finite semigroups whose direct product is inherently nondualisable.
\end{abstract}
\thanks{Results in this article were obtained over a eleven year period during which the author was supported by Australian Postdoctoral Fellowship DP0342459, ARC Discovery Project DP1094578 and ARC Future Fellowship FT1201000666.}
\keywords{Natural duality, semigroup, nilpotent, projective plane, quasivariety}
  \subjclass[2000]{Primary: 08C20.  Secondary: 20M07, 20M17}
\maketitle

\section{Introduction}

The general theory of \emph{natural dualities} emerged from ``classical'' dualities such as Stone's duality for Boolean algebras, Pontryagin's duality for abelian groups and Priestley's duality for distributive lattices.  While these are instances of category theoretic dualities, they share enough common features at the algebra level to be treated concretely within a single algebraic framework.  This general theory of natural dualities was first developed by Davey and Werner in \cite{davwer} and has seen substantial development by many authors.  The standard reference is Davey and Clark \cite{cladav98}.  Most natural dualities concern quasivarieties generated by a single finite algebra, however the idea extends to relational structures, to quasivarieties generated by sets of algebras, and to quasivarieties generated by infinite algebras.  In general, the quasivariety of a finite algebra ${\bf M}$ may not admit a natural duality and in this case we will say that say that \emph{${\bf M}$ is nondualisable}.  If no finite algebra ${\bf N}$ whose quasivariety contains ${\bf M}$ is dualisable, then ${\bf M}$ is said to be \emph{inherently nondualisable}.  

The current article primarily concerns dualisability for finite semigroups.
A number of results have already been obtained in this area, though the results are more negative than positive.  On the positive side Al Dhamri~\cite{AlD}, has recently shown that every normal band is dualisable.  However in \cite{jac03} the author showed that any non-normal band is inherently nondualisable.  Quackenbush and Szab\'o \cite{quasza02b} showed that every finite  group with cyclic Sylow subgroups is dualisable, but also (\cite{quasza}) that a finite group with a nonabelian Sylow subgroup is inherently nondualisable.  There is currently no published proof that every finite group with abelian Sylow subgroups is dualisable, though such a proof has been announced by Nickedemus~\cite{nic}.

Recall that a \emph{Clifford semigroup} is a semilattice of groups: there is a congruence $\theta$ whose blocks are subgroups, and for which the corresponding quotient is a semilattice.  Positive dualisability results for groups can be extended to certain Clifford semigroups using results of Davey and Knox \cite{davkno}.  The result can be stated as follows: if the quasivariety of a finite group ${\bf G}$  admits a natural duality, then the quasivariety consisting of all Clifford semigroups whose subgroups lie in the quasivariety of ${\bf G}$ is finitely generated as a quasivariety and is dualisable (see \cite{jac03}).  In particular, if it is true that every finite group with abelian Sylow subgroups is dualisable, then a Clifford semigroup whose subgroups have abelian Sylow subgroups lies within a dualisable variety of Clifford semigroups.  In the world of finite monoids (in either the monoid signature or the semigroup signature) or the world of inverse semigroups (in either the unary semigroup signature or the semigroup signature), there is a converse: if a member of one of these classes has a subgroup with a nonabelian Sylow subgroup, or has a subalgebra that is not a Clifford semigroup, then the member is inherently nondualisable~\cite{jac03}.  Problem \ref{problem1} in the present article asks for an understanding of when Clifford semigroups are dualisable.

The present article will add to this list of mostly negative results.  We give a general nondualisability result (Theorem \ref{thm:T}), based on notions of nilpotence in semigroups.  This result is then applied to give our main results: Theorem~\ref{thm:nilvar}, which states that any finite semigroup whose \emph{variety} contains a proper $3$-nilpotent semigroup is inherently nondualisable;  a new proof of the Quackenbush and Szab\'o~\cite{quasza} result that any group with a nonabelian Sylow subgroup is inherently nondualisable (Theorem \ref{thm:group}); and Theorem~\ref{thm:cs}, which states that any completely simple semigroup that is not isomorphic to the direct product of a group with a rectangular band is inherently nondualisable.  There are a number of corollaries, including the result that the proportion of $n$-element semigroups that are dualisable approaches $0$ as $n$ tends to infinity (Corollary \ref{cor:almostall}), the result that a dualisable finite semigroup must have index $1$ (Corollary \ref{cor:index}) and that a dualisable finite regular semigroup must be completely regular and that each $\mathcal{J}$-class must be a direct product of a group with a rectangular band (Corollary \ref{cor:regular}).  We also show that the class of finite semigroups admitting a natural duality fails to be closed under finite direct products, by giving two 3-element semigroups that are dualisable but whose direct product is inherently nondualisable.

A number of these results were obtained in 2003 but despite being widely distributed amongst the algebra group at La Trobe University, they  managed to evade publication until this celebration of Brian Davey's 65th birthday.  A stumbling block was Theorem \ref{thm:nilvar}, which (along with Theorem \ref{thm:cs}) existed only the weaker form of Theorem \ref{thm:nilvarbinar} and Corollary \ref{cor:index} until early 2014.

\section{Duality  and nondualisability techniques}
The present article will focus on nondualisability, but for context and completeness we give a very brief overview of what it means to admit a natural duality.  The reader is directed to Clark and Davey \cite{cladav98} for a far more complete introduction to the topic.  A ``natural duality'' is a particular form of category-theoretic duality between a quasivariety of algebras (and more generally of structures) and a topological quasivariety.  For a quasivariety $\mathcal{Q}$ generated by a single finite algebra ${\bf M}$ (so, $\mathcal{Q}=\mathsf{ISP}({\bf M})$), the topological quasivariety will be generated by a different structure $\mathbb{M}$ on the same underlying universe $M$ of ${\bf M}$.  This ``alter ego''  $\mathbb{M}$ will carry the discrete topology, but also  operations, partial operations and relations.  In order for the alter ego $\mathbb{M}$ have any hope of facilitating a natural duality, it is necessary \cite[\S1.5]{cladav98} that each operation of $\mathbb{M}$ is a homomorphism from ${\bf M}^n$ into ${\bf M}$, that each relation  is a subalgebra of ${\bf M}^n$, and that each partial operation is a homomorphism from a subalgebra of ${\bf M}^n$ into ${\bf M}$ (here $n$ is the arity of the operation, relation or partial operation being considered).  The topological quasivariety $\mathcal{X}$ of $\mathbb{M}$ will be $\mathsf{IS_cP^+}(\mathbb{M})$, the class of all structures of the same type as $\mathbb{M}$ that arise by way of (topologically and algebraically) isomorphic copies of closed substructures of powers (with nonempty index sets) of $\mathbb{M}$, where topology is extended to powers by way of the product topology.  Under these assumptions, for each ${\bf A}\in \mathsf{ISP}({\bf M})$, the homset $\operatorname{hom}_\mathcal{Q}({\bf A},{\bf M})$ will always be a closed substructure of $\mathbb{M}^{A}$ and thus lies in $\mathcal{X}$.  This member of $\mathcal{X}$ is denoted by $D({\bf A})$.  Similarly, for each object $\mathbb{A}\in \mathcal{X}$ the homset $\operatorname{hom}_{\mathcal{X}}(\mathbb{A},\mathbb{M})$ (the set of continuous homomorphisms from $\mathbb{A}$ into $\mathbb{M}$) will be a subuniverse of ${\bf M}^A$; the corresponding subalgebra is denoted $E(\mathbb{A})$.  It is always the case that for every ${\bf A}\in\mathcal{Q}$, there is a natural evaluation map $e:{\bf A}\to E(D({\bf A}))$, given by $e_a(x)=x(a)$.  Under the existing assumptions on $\mathbb{M}$, this map is necessarily an injective homomorphism.  When $e$ is an isomorphism, then it is said that \emph{$\mathbb{M}$ yields a duality on ${\bf A}$}.  If $\mathbb{M}$ yields a duality on every ${\bf A}\in\mathcal{Q}$, then $\mathbb{M}$ is said to \emph{yield a \up(natural\up) duality on $\mathcal{Q}$}, and ${\bf M}$ is said to \emph{admit a natural duality} (by way of the alter ego $\mathbb{M}$) or be \emph{dualisable}.  Evidently, this is equivalent to each ${\bf A}\in\mathcal{Q}$ being isomorphic to a natural structure on the family of all continuous homomorphisms of an object in $\mathcal{X}$ into $\mathbb{M}$.  

The algebra ${\bf M}$ is \emph{inherently nondualisable} (or IND), if it does not lie in the quasivariety of any finite dualisable algebra.
 The standard tool for demonstrating inherent nondualisability is the following lemma from Clark and Davey \cite{cladav98} (see \cite[10.5.5]{cladav98}).  
   \begin{IND}\label{INDL} \cite{cladav98}
  Let $\mathbf{D}$ be a finite algebra. Then ${\bf D}$ is inherently nondualisable if there exists an infinite set $S$, a subalgebra $\mathbf{A}$ of $\mathbf{D}^{S}$ and an infinite subset $A_{0}$ of $A$ and a function $u\colon \mathbb{N}\to \mathbb{N}$ such that
 \begin{itemize}
 \item[(i)]  if $\mathrel{\theta}$ is a congruence on $\mathbf{A}$ of finite index at most $n$, then $\mathrel{\theta}\upharpoonright_{A_{0}}$ has only one class with more than $u(n)$ elements,
 \item[(ii)] $g\notin A$ where $g$ is the element of $D^{S}$ such that $g(s):=\rho_{s}(b)$ \up(the projection of $b$ to coordinate $s$\up), for each $s\in S$, with $b$ any element of the block of $\ker{\rho_{s}}{\upharpoonright_{A_{0}}}$ which has size greater than $u(|D|)$.
 \end{itemize}
  \end{IND}
The element $g$ in this lemma is usually known as the \emph{ghost element}.

We also make use of the following tool for demonstrating dualisability of a finite algebra ${\bf M}$.
\begin{IC} \cite[Corollary 2.2.12]{cladav98}\label{IC}
Suppose that $\mathbb{M}$ is an alter ego of ${\bf M}$.  Then $\mathbb{M}$ dualises ${\bf M}$ provided the following interpolation condition is satisfied\up:
for each $n\in\mathbb{N}$ and each substructure $\mathbb{X}\leq \mathbb{M}^n$, every morphism $\alpha:\mathbb{X}\to\mathbb{M}$ extends to term function $t:{\bf M}^n \to {\bf M}$ of the algebra ${\bf M}$.
\end{IC}

\section{Projective plane construction}
In this section we use the Inherent Nondualisability Lemma \ref{INDL} (henceforth, the IND Lemma) to give a general configuration causing inherent nondualisability.  The argument will apply to algebras in which there is a binary term operation $\cdot$, and elements $\mathsf{a},\mathsf{b},\mathsf{c},\mathsf{d},\mathsf{e},\mathsf{f}$ (not necessarily distinct) such that the following template ${\bf T}$ of products occur:
\begin{center}
\begin{tabular}{c|cc}
$\cdot$&$\mathsf{c}$&$\mathsf{d}$ \\
\hline
$\mathsf{a}$& $\mathsf{e}$ &$\mathsf{f}$  \\
$\mathsf{b}$& $\mathsf{f}$ &$\mathsf{f}$  \\
\end{tabular}
\end{center}
An algebra in which this occurs is said to \emph{interpret ${\bf T}$}.  Our main result will require an interpretation of ${\bf T}$ with $\mathsf{e}\neq\mathsf{f}$, as well as a more technical ``geometric'' condition that we describe in due course.  A very large array of algebraic structures interpret ${\bf T}$ with $\mathsf{e}\neq\mathsf{f}$, though many will fail the technical condition.

As an example, consider any algebra in which there is a fundamental binary operation $\cdot$ for which there is a multiplicative $0$ but such that not every product (in $\cdot$) equals $0$.  So there are elements $\mathsf{a}$ and $\mathsf{c}$ (possibly equal) such that $\mathsf{e}:=\mathsf{a}\cdot \mathsf{c}\neq 0$.  Letting $\mathsf{b}=\mathsf{d}=\mathsf{f}:=0$ we obtain an interpretation of ${\bf T}$ with $\mathsf{e}\neq \mathsf{f}$.  We will see that asymptotically, almost all finite semigroups interpret ${\bf T}$ in this way, as well as satisfying the additional geometric condition.

As a second example consider any nonabelian group ${\bf G}$, so that there are elements $\mathsf{a}$ and $\mathsf{c}$ such that the commutator $\mathsf{e}:=[\mathsf{a},\mathsf{c}]\neq 1$.  Then ${\bf G}$ interprets ${\bf T}$ via the binary term operation of commutator by letting $\mathsf{b}=\mathsf{d}=\mathsf{f}:=1$.  The technical condition will be shown to hold when ${\bf G}$ contains an abelian Sylow subgroup.

Recall that a projective plane $\mathcal{P}$ consists of a set of points $\mathsf{P}$ along with a family $L$ of sets of points, known as lines, satisfying the property that any pair of distinct points are members of a unique line, any pair of distinct lines intersect to a unique point, and there are four points in general position: no three lying on the same line.  Lines will be denoted by upper case $L,K$, possibly with subscripts.  For two distinct lines $L,K$ we let $L\wedge K$ denote the unique point on both $L$ and $K$; otherwise, if $L=K$ then $L\wedge K=L$.  For two distinct points $p,q$, the unique line containing both $p$ and $q$ is denoted by $p\vee q$.

We will assume throughout that any projective plane we consider has infinitely many points.  It is well known that there exist projective planes of all infinite cardinalities (measured in terms of the cardinality of the set of points); this follows from applications of the L\"owenheim Skolem Theorems for example, or from direct constructions based over fields.    While we do not make explicit use of this in the article, it can be used (by trivial adjustments to assumptions) to push the inherent nondualisability results in this article to proofs of inherent non-$\kappa$-dualisability in the sense of Davey, Idziak, Lampe and McNulty \cite{DILM}.

Let $\infty$ be a symbol not in $\mathsf{P}$, and let $\mathsf{P}_\infty$ denote $\mathsf{P}\cup\{\infty\}$.    The ``point'' $\infty$ is used as a book-keeping device, to record the ghost element.

We now describe some standard notation for certain elements of cartesian powers.  Let $M,S$ be sets, and $M^S$ be the usual cartesian power.  If $I_1,\dots,I_n$ are pairwise disjoint subsets of $S$ and $a,b_1,\dots,b_n$ are elements of $M$, then $a_{I_1,\dots,I_n}^{b_1,\dots,b_n}$ denotes the element of $M^S$ given by
\[
a_{I_1,\dots,I_n}^{b_1,\dots,b_n}(i)=\begin{cases}
b_j&\text{ if }i\in I_{j}\\
a&\text{ otherwise}.
\end{cases}
\]
In a slight abuse of notation, we allow elements of $S$ in the subscript to be considered as if they were singleton sets.  For example, if $S=\mathbb{Z}$ (the integers), then $a_{1,2\mathbb{Z}}^{b,c}$ is the same as $a_{\{1\},2\mathbb{Z}}^{b,c}$, the tuple that is $c$ on all even coordinates and $a$ on all odd coordinates except for coordinate $1$ where it equals $b$.

\begin{thm}\label{thm:T}
Let ${\bf M}$ be an algebra interpreting ${\bf T}$ via some binary term operation.  If the subalgebra of ${\bf M}^{\mathsf{P}_\infty}$ generated by the following ``\emph{line generators}''\up:
\[
\{\mathsf{b}_{\infty,L}^{\mathsf{a},\mathsf{a}},\mathsf{d}_{\infty,L}^{\mathsf{c},\mathsf{c}}\mid L \text{ a line in }\mathcal{P}\}  
\]
does not contain the ghost element $g:=\mathsf{f}_{\infty}^\mathsf{e}$, then ${\bf M}$ is inherently nondualisable.
\end{thm}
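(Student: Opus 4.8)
The plan is to apply the Inherent Nondualisability Lemma \ref{INDL} with $S:=\mathsf{P}_\infty$, with $\mathbf{A}$ the given subalgebra generated by the line generators, and with $A_0:=\{\mathsf{f}^{\mathsf{e},\mathsf{e}}_{\infty,p}\mid p\in\mathsf{P}\}$. We may assume $\mathsf{e}\neq\mathsf{f}$, this being the case of interest (if $\mathsf{e}=\mathsf{f}$ the set $A_0$ below collapses to a single point). First I would check $A_0\subseteq A$: through any point $p$ there pass at least two lines $L\neq K$, and a direct evaluation of the template shows that $\mathsf{b}^{\mathsf{a},\mathsf{a}}_{\infty,L}\cdot\mathsf{d}^{\mathsf{c},\mathsf{c}}_{\infty,K}$ takes the value $\mathsf{a}\cdot\mathsf{c}=\mathsf{e}$ on $\infty$ and on $L\wedge K=p$, and the value $\mathsf{f}$ (one of $\mathsf{a}\mathsf{d},\mathsf{b}\mathsf{c},\mathsf{b}\mathsf{d}$) on every other coordinate; hence this product equals $\mathsf{f}^{\mathsf{e},\mathsf{e}}_{\infty,p}\in A$, and $A_0$ is infinite. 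Computing the ghost element of the IND Lemma coordinatewise: at $s=\infty$ every member of $A_0$ takes the value $\mathsf{e}$, while at $s=p\in\mathsf{P}$ exactly one member takes $\mathsf{e}$ and all (infinitely many) others take $\mathsf{f}$, so the unique large block gives value $\mathsf{f}$. Thus the IND ghost is $\mathsf{f}^{\mathsf{e}}_\infty=g$, and condition (ii) is precisely the hypothesis $g\notin A$.

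Everything then reduces to condition (i). Fix a congruence $\theta$ of index at most $n$, write $\phi\colon\mathbf{A}\to\mathbf{A}/\theta=:\mathbf{B}$ and set $\psi(p):=\phi(\mathsf{f}^{\mathsf{e},\mathsf{e}}_{\infty,p})$; the $\theta$-classes meeting $A_0$ are the fibres of $\psi\colon\mathsf{P}\to B$, so I must show at most one fibre exceeds $u(n)$. The engine is the identity $\mathsf{f}^{\mathsf{e},\mathsf{e}}_{\infty,p}=\mathsf{b}^{\mathsf{a},\mathsf{a}}_{\infty,L}\cdot\mathsf{d}^{\mathsf{c},\mathsf{c}}_{\infty,K}$ for any distinct $L,K\ni p$, giving $\psi(p)=\phi(\mathsf{b}^{\mathsf{a},\mathsf{a}}_{\infty,L})\cdot\phi(\mathsf{d}^{\mathsf{c},\mathsf{c}}_{\infty,K})$. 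Colour each line $L$ by $\bigl(\phi(\mathsf{b}^{\mathsf{a},\mathsf{a}}_{\infty,L}),\phi(\mathsf{d}^{\mathsf{c},\mathsf{c}}_{\infty,L})\bigr)\in B\times B$, at most $n^2$ colours, and call $p$ \emph{$\beta$-heavy} if at least two lines through $p$ have first colour $\beta$. The crucial \emph{linking} observation is that if $p\neq q$ are both $\beta$-heavy then, with $\ell:=p\vee q$ and lines $L\ni p$, $L'\ni q$ of first colour $\beta$ avoiding $\ell$, the two factorisations yield $\psi(p)=\beta\cdot\phi(\mathsf{d}^{\mathsf{c},\mathsf{c}}_{\infty,\ell})=\psi(q)$. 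Hence $\psi$ is constant on the set of $\beta$-heavy points for each fixed $\beta$; and since infinitely many lines pass through each point, every point is $\beta$-heavy for some $\beta$.

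It remains to control how the finitely many heavy sets fit together. I would prove the following purely combinatorial statement about the plane: for any colouring of the lines by finitely many (say $m$) colours there is a single colour $\beta^*$ such that all but at most $h(m)$ points are $\beta^*$-heavy, with $h$ depending on $m$ alone. Granting this for the first-coordinate colouring (so $m\le n$), all but at most $h(n)$ points are $\beta^*$-heavy, and linking forces $\psi$ to take a single value $v^*$ off an exceptional set of size at most $h(n)$. Setting $u:=h$ then delivers condition (i): the fibre $\psi^{-1}(v^*)$ is cofinite, while every other fibre lies inside the exceptional set and so has at most $u(n)$ elements (the same $u$ serves the ghost computation, where the blocks used are infinite). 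Lemma \ref{INDL} then gives that $\mathbf{M}$ is inherently nondualisable.

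The hard part is this combinatorial statement. For two colours it follows from a short incidence count: were both colours to have infinitely many ``light'' points, then among the grid of connecting lines between light points of the two colours one would force simultaneously too many and too few lines of one colour. For three or more colours the pairwise count genuinely fails — one can colour so that two transversal lines are each light for a different colour — yet in every such configuration a third, ``generic'' colour is heavy at exactly those points and re-links the two would-be exceptional sets. The real content, and the step I expect to be most delicate, is therefore to show that the \emph{bulk} colour (the one carrying the lines in general position) is heavy off a set whose size is bounded by a function of the number of colours alone, uniformly across all infinite projective planes; this uniform finiteness is precisely what supplies the function $u$ required by the IND Lemma.
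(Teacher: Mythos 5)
Your setup matches the paper exactly: the same $S=\mathsf{P}_\infty$, the same generated subalgebra $\mathbf{A}$, the same $A_0=\{\mathsf{f}^{\mathsf{e},\mathsf{e}}_{\infty,p}\mid p\in\mathsf{P}\}$, the same ghost computation, and your ``linking observation'' is a correct use of the template identity $\mathsf{f}^{\mathsf{e},\mathsf{e}}_{\infty,p}=\mathsf{b}^{\mathsf{a},\mathsf{a}}_{\infty,L}\cdot\mathsf{d}^{\mathsf{c},\mathsf{c}}_{\infty,K}$ for distinct lines $L,K$ through $p$. But there is a genuine gap precisely where you flag it: you reduce condition (i) of Lemma \ref{INDL} to the global combinatorial claim that for every colouring of the lines of an infinite projective plane by $m$ colours there is a single colour $\beta^*$ that is heavy at all but at most $h(m)$ points, with $h$ depending on $m$ alone. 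You prove this only for $m=2$, and you concede that the grid count breaks for $m\geq 3$; the remark that ``the bulk colour re-links the exceptional sets'' is a hope, not an argument (colour classes can be pencils or near-pencils, and the light points of a class can all be collinear, which already defeats the incidence count you rely on for two colours). As written, the proposal therefore does not establish condition (i), which is the entire content of the theorem.

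The gap is also unnecessary, because condition (i) never asks for a cofinite fibre of $\psi$; it only asks that any two blocks of $\theta{\upharpoonright_{A_0}}$ of size exceeding $u(n)$ coincide, and this follows from a purely local double pigeonhole --- which is how the paper argues. Given two alleged large blocks $\{p_1,\dots,p_n\}$ and $\{q_1,\dots,q_n\}$, choose a single auxiliary point $r$ so that the lines $L_i:=p_i\vee r$ are pairwise distinct and miss every $q_j$; since $\theta$ has small index, some pair $\mathsf{b}^{\mathsf{a},\mathsf{a}}_{\infty,L_i}\mathrel{\theta}\mathsf{b}^{\mathsf{a},\mathsf{a}}_{\infty,L_j}$, and your linking computation (applied to a point $p\in L_i\setminus\{r\}$, the point $p_j\in L_j$, and the connecting line $p\vee p_j$) shows that \emph{every} point of $L_i$ other than $r$ has its $A_0$-element $\theta$-related to the first block. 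Then choose $n$ points $r_1,\dots,r_n$ on $L_i$ avoiding the finitely many intersections $(q_k\vee q_{k'})\wedge L_i$, and pigeonhole the generators $\mathsf{d}^{\mathsf{c},\mathsf{c}}_{\infty,r_k\vee q_k}$: two of them are $\theta$-related, and one further application of the template identity links the first block to $\mathsf{f}^{\mathsf{e},\mathsf{e}}_{\infty,q_{k_2}}$. This yields condition (i) with the explicit bound $u(n)=n+1$, uses no statement about arbitrary colourings, and in particular needs no uniformity over planes. Your colouring lemma, even if true, is strictly stronger than what is required and appears harder to prove than the theorem itself; to repair your write-up, replace it by the two-step pigeonhole above, for which your linking observation is already the key computation.
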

\begin{proof}
We apply the Inherent Nondualisability Lemma \ref{INDL} (the IND Lemma) to the subalgebra ${\bf A}$ of ${\bf M}^{\mathsf{P}_\infty}$ generated by the line generators.  The set $A_0$ of the IND Lemma is chosen as $A_0:=\{\mathsf{f}_{\infty,p}^{\mathsf{e},\mathsf{e}}\mid p\in \mathsf{P}\}$, so that the ghost element $g$ is $\mathsf{f}_{\infty}^\mathsf{e}$, in agreement with the statement of the present theorem.  Note that if $L$ and $K$ are lines with $L\wedge K=p$, then $\mathsf{f}_{\infty,p}^{\mathsf{e},\mathsf{e}}$ arises in ${\bf A}$ by way of $\mathsf{b}_{\infty,L}^{\mathsf{a},\mathsf{a}}\mathsf{d}_{\infty,K}^{\mathsf{c},\mathsf{c}}=\mathsf{f}_{\infty,L\wedge K}^{\mathsf{e},\mathsf{e}}=\mathsf{f}_{\infty,p}^{\mathsf{e},\mathsf{e}}$, so that $A_0$ is indeed a subset of ${\bf A}$.  Item (ii) of the IND Lemma holds by assumption.  We need to establish item (i).

We consider a congruence $\theta$ on ${\bf A}$ of index strictly less than $n$ (so that the function $u:\mathbb{N}\to\mathbb{N}$ of the IND Lemma is given by $u(n):=n+1$).  We show that the restriction of $\theta$ to $A_0$ has at most one block of size $n$ or more.  For this, assume that $\{p_1,\dots,p_n\}$ and $\{q_1,\dots,q_n\}$ are disjoint $n$-element subsets of the plane $\mathcal{P}$ such that for each $i,j\leq n$ we have $\mathsf{f}_{\infty,p_i}^{\mathsf{e},\mathsf{e}}\mathrel{\theta}\mathsf{f}_{\infty,p_j}^{\mathsf{e},\mathsf{e}}$ and $\mathsf{f}_{\infty,q_i}^{\mathsf{e},\mathsf{e}}\mathrel{\theta}\mathsf{f}_{\infty,q_j}^{\mathsf{e},\mathsf{e}}$.  Our goal is to show that $\mathsf{f}_{\infty,p_i}^{\mathsf{e},\mathsf{e}}\mathrel{\theta}\mathsf{f}_{\infty,q_j}^{\mathsf{e},\mathsf{e}}$, so that there is just one ``large'' block of $\theta$ on $A_0$.  

Let $r$ be any  point not amongst $p_1,\dots,p_n,q_1,\dots,q_n$ and such that the $n$ lines of the form $L_i:=p_i\vee r$ are pairwise distinct and do not contain $q_j$ for any $j\leq n$; see Figure \ref{fig:1}.
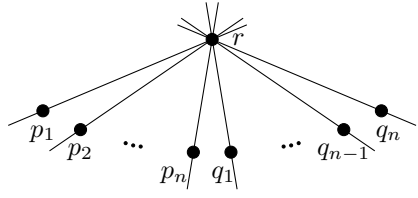
\begin{figure}
\begin{tikzpicture}
\draw [fill] (0,0.55) circle [radius=0.08];
\node at (0,0.25) {$p_1$};
\draw [fill] (0.5,0.3) circle [radius=0.08];
\node at (0.5,0) {$p_2$};

\draw [fill] (1.1,0.12) circle [radius=0.02];
\draw [fill] (1.2,0.09) circle [radius=0.02];
\draw [fill] (1.3,0.075) circle [radius=0.02];

\draw [fill] (2,0) circle [radius=0.08];
\node at (1.75,-.3) {$p_n$};
\draw [fill] (2.5,0) circle [radius=0.08];
\node at (2.4,-.3) {$q_1$};

\draw [fill] (3.4,0.12) circle [radius=0.02];
\draw [fill] (3.3,0.09) circle [radius=0.02];
\draw [fill] (3.2,0.075) circle [radius=0.02];

\draw [fill] (4,0.3) circle [radius=0.08];
\node at (4,0) {$q_{n-1}$};
\draw [fill] (4.5,0.55) circle [radius=0.08];
\node at (4.6,0.25) {$q_{n}$};

\draw [fill] (2.25,1.5) circle [radius=0.08];
\node at (2.6,1.5) {$r$};

\draw (-0.461,0.356) -- (2.711,1.694);
\draw (0.0876,0.0172) -- (2.662,1.783);
\draw (1.918,-0.493) -- (2.332,1.993);
\draw (2.5814,-0.493) -- (2.169,1.993);
\draw (4.412,0.0172) -- (1.838,1.783);
\draw (4.961,0.356) -- (1.789,1.694);

\end{tikzpicture}\caption{Points $p_1,\dots,p_n,q_1,\dots,q_n$ and a choice of $r$.}\label{fig:1}
\end{figure}
As $\theta$ has index less than $n$ it follows that there are $i\neq j$ (both at most $n$) such that $\mathsf{b}_{\infty,L_i}^{\mathsf{a},\mathsf{a}}\mathrel{\theta}\mathsf{b}_{\infty,L_j}^{\mathsf{a},\mathsf{a}}$.  Fixing such a choice of $i,j$, for every point $p\in L_i\backslash\{r\}$ we have 
\begin{align*}
\mathsf{f}_{\infty,p}^{\mathsf{e},\mathsf{e}}&=\mathsf{b}_{\infty,L_i}^{\mathsf{a},\mathsf{a}}\mathsf{d}_{\infty,p\vee p_j}^{\mathsf{c},\mathsf{c}}\\
&\mathrel{\theta}\mathsf{b}_{\infty,L_j}^{\mathsf{a},\mathsf{a}}\mathsf{d}_{\infty,p\vee p_j}^{\mathsf{c},\mathsf{c}}\\
&=\mathsf{f}_{\infty,p_j}^{\mathsf{e},\mathsf{e}}\\
&\mathrel{\theta}\mathsf{f}_{\infty,p_k}^{\mathsf{e},\mathsf{e}} \qquad (\text{for all }k=1,\dots,n).
\end{align*}
Thus,
\begin{equation}
\text{all points $p$ on $L_i$ except possibly $r$ have }\mathsf{f}_{\infty,p}^{\mathsf{e},\mathsf{e}}\mathrel{\theta}\mathsf{f}_{\infty,p_1}^{\mathsf{e},\mathsf{e}}.\tag{$\dagger$}\label{dagger}
\end{equation}
Continuing with the fixed choice of $i\leq n$, there are only finitely many points on $L_i$ of the form $(q_k\vee q_{k'})\wedge L_i$ (for some $k\neq k'$ in $\{1,\dots,n\}$).  As $\mathsf{P}$ is infinite, the number of points on $L_i$ is infinite, so we may select distinct points $r_1,\dots,r_n$  that are \emph{not} $r$ and are  \emph{not} of the form $(q_k\vee q_{k'})\wedge L_i$.  In particular,  for each $k_1\neq k_2$  the lines $r_{k_1}\vee q_{k_1}$ and $r_{k_2}\vee q_{k_2}$ are distinct; see Figure \ref{fig:2}.
\begin{figure}
\begin{tikzpicture}
\draw [fill] (0,0) circle [radius=0.08];
\node at (0.2,-0.3) {$p_i$};

\draw [fill] (1,2.5) circle [radius=0.08];
\node at (1.3,2.5) {$r$};

\draw [fill] (0.2,.5) circle [radius=0.08];
\node at (-0.1,.4) {$r_1$};
\draw [fill] (0.6,1.5) circle [radius=0.08];
\node at (0.1,1.4) {$r_{n-1}$};
\draw [fill] (0.8,2) circle [radius=0.08];
\node at (0.5,1.85) {$r_n$};

\draw (-0.371,-0.928) -- (1.371,3.428);
\node at (1,3.3) {$L_i$};

\draw [fill] (2.5,0) circle [radius=0.08];
\node at (2.4,-.3) {$q_1$};

\draw [fill] (3.4,0.57) circle [radius=0.02];
\draw [fill] (3.3,0.49) circle [radius=0.02];
\draw [fill] (3.2,0.42) circle [radius=0.02];

\draw [fill] (4,1) circle [radius=0.08];
\node at (4.5,0.8) {$q_{n-1}$};
\draw [fill] (4.5,1.5) circle [radius=0.08];
\node at (4.9,1.3) {$q_{n}$};

\draw (2.989,-0.106) -- (-0.289,0.606);
\draw (4.495,0.927) -- (0.105,1.573);
\draw (4.995,1.433) -- (0.305,2.067);

\end{tikzpicture}\caption{The line $L_i$, the points $r_1,\dots,r_n$ and connecting lines.}\label{fig:2}
\end{figure}
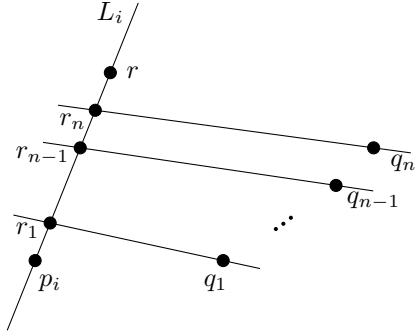
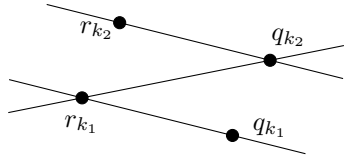
\begin{figure}
\begin{tikzpicture}
\draw [fill] (0,0.5) circle [radius=0.08];
\node at (0,0.15) {$r_{k_1}$};

\draw [fill] (0.5,1.5) circle [radius=0.08];
\node at (0.2,1.4) {$r_{k_2}$};

\draw [fill] (2,0) circle [radius=0.08];
\node at (2.5,0.1) {$q_{k_1}$};

\draw [fill] (2.5,1) circle [radius=0.08];
\node at (2.75,1.3) {$q_{k_2}$};

\draw (-0.97,0.743) -- (2.97,-0.243);
\draw (-0.47,1.743) -- (3.47,0.757);
\draw (-0.981,0.304) -- (3.481,1.196);

\end{tikzpicture}\caption{The lines $r_{k_1}\vee q_{k_2}$, $r_{k_1}\vee q_{k_1}$ and $r_{k_2}\vee q_{k_2}$.}\label{fig:3}
\end{figure}

As the index of $\theta$ is less than $n$, it follows that there are $k_1\neq k_2$ such that $\mathsf{d}_{\infty,r_{k_1}\vee q_{k_1}}^{\mathsf{c},\mathsf{c}}\mathrel{\theta}\mathsf{d}_{\infty,r_{k_2}\vee q_{k_2}}^{\mathsf{c},\mathsf{c}}$.  Then using $r_{k_1}=(r_{k_1}\vee q_{k_2})\wedge (r_{k_1}\vee q_{k_1})$ and $q_2=(r_{k_1}\vee q_{k_2})\wedge (r_{k_2}\vee q_{k_2})$
(see Figure \ref{fig:3}) we have
\begin{align*}
\mathsf{f}_{\infty,p_1}^{\mathsf{e},\mathsf{e}}&\mathrel{\theta} \mathsf{f}_{\infty,r_{k_1}}^{\mathsf{e},\mathsf{e}}\qquad (\text{by \eqref{dagger}})\\
&=\mathsf{b}_{\infty,r_{k_1}\vee q_{k_2}}^{\mathsf{a},\mathsf{a}}\mathsf{d}_{\infty,r_{k_1}\vee q_{k_1}}^{\mathsf{c},\mathsf{c}}\\
&\mathrel{\theta}\mathsf{b}_{\infty,r_{k_1}\vee q_{k_2}}^{\mathsf{a},\mathsf{a}}\mathsf{d}_{\infty,r_{k_2}\vee q_{k_2}}^{\mathsf{c},\mathsf{c}}\\
&=\mathsf{f}_{\infty,q_{k_2}}^{\mathsf{e},\mathsf{e}},
\end{align*}
which completes the proof that there is exactly one block of $\theta$ with size $n$ or more.
\end{proof}
\section{Semigroup theoretic preliminaries}\label{sec:sgp}
In this section we set some basic notation and recall some of the fundamental structural theory of finite semigroups.  The reader is directed to a text such as Howie \cite{how} for a full treatment; material on Rees matrix semigroups may be found in \cite[Chapter 3]{how} for example.

Recall that the \emph{index} of a finite semigroup ${\bf S}$ is the smallest number $i$ such that ${\bf S}\models x^{i}\approx x^{i+p}$ for some $p>1$.  The smallest number $p$ for which this equation holds is called the \emph{period}.  If $d\geq i$ is such that $d$ is congruent to $0$ modulo $p$  then $s^d s^d=s^{d+d}=s^d$ for any $s\in S$.   The element $s^d$ also arises as the $\lim_{n\to \infty}s^{n!}$, which is eventually constant in any finite semigroup.  The notation $s^\omega$ is usually used to denote this idempotent power and the notation extends to $s^{\omega+i}$ for any $i\in\mathbb{Z}$ by setting $s^{\omega+i}$ to be $s^\omega s^{i'}$ where $i'$ is any positive integer congruent to $i$ modulo $p$.  Note that $s^{\omega+i}s^{\omega+j}=s^{\omega+i+j}$.

A semigroup is \emph{$k$-nilpotent} if it satisfies the semigroup law $x_1x_2\dots x_k\approx y_1y_2\dots y_k$, which is equivalent to the property that there is a zero element $0$ and every product of length $k$ is equal to $0$.  A $k$-nilpotent semigroup is a \emph{proper} $k$-nilpotent semigroup if it is not $(k-1)$-nilpotent.  Semigroups that are 2-nilpotent are often called null semigroups.

A semigroup ${\bf S}$ is said to be \emph{simple} if the only ideal of ${\bf S}$ is ${\bf S}$ itself: this is not the same as the universal algebraic notion of being simple, which in semigroup theory is usually called \emph{congruence free}.  A semigroup ${\bf S}$ is \emph{$0$-simple} if it has a 0 element and the only ideals are $\{0\}$ and ${\bf S}$.  A ($0$-)simple semigroup is \emph{completely} ($0$-)simple if it has a primitive idempotent; that is, an idempotent $e$ such that whenever $f$ is idempotent with $ef=fe=f\neq 0$ then $e=f$.  It is a classical result of semigroup theory that all finite ($0$-)simple semigroups are completely ($0$-)simple.

We now recall Rees' powerful classification theorem for completely ($0$)-simple semigroups.  Choose a group ${\bf G}$ and let $0$ be a symbol not in $G$.  Choose a pair of nonempty sets $I,\Lambda$ and a $\Lambda\times I$ matrix $P$ with entries from $G\cup\{0\}$ such that no row nor column consists entirely of $0$.  (Note that $P$ is notationally distinct from the projective plane $\mathcal{P}$ on points $\mathsf{P}$.)  The \emph{Rees matrix semigroup with $0$} built from ${\bf G}$ and $P$, denoted $M^0[{\bf G},P]$, is the semigroup on the universe $\{0\}\cup \{(i,g,\lambda)\mid i\in I,g\in G,\lambda\in \Lambda\}$ with multiplication
\[
(i,g,\lambda)(j,h,\rho):=\begin{cases}
(i,gP_{\lambda,j}h,\rho)&\text{ if }P_{\lambda,j}\in G\\
0&\text{ otherwise.}
\end{cases}
\]
where $P_{\lambda,j}$ is the $(\lambda,j)^{\rm th}$ entry of $P$.  Rees matrix semigroups with $0$ are always completely $0$-simple, and moreover every completely $0$-simple semigroup arises in this way.  If all entries of $P$ are from $G$, then the element $0$ may be dropped (the notation is $M[{\bf G},P]$) and one obtains a construction for completely simple semigroups.  It is possible for different matrices $P$ to give rise to the same semigroup, up to isomorphism.

Completely $0$-simple semigroups form a basic building block of any finite semigroup.  Recall that in a semigroup ${\bf S}$ we say that $a$ \emph{divides} $b$ if there are elements $c$ or $d$ in $S$ (or possibly empty) such that $cad=b$.  The ``divides'' relation defines a preorder on any semigroup, and the equivalence classes are known as \emph{$\mathcal{J}$-classes}.
\begin{thm}\label{thm:Rees}
Let ${\bf S}$ be a finite semigroup,  $s$ be an element of $S$ and $J_s$ denote the $\mathcal{J}$-class of $s$.  Let ${\bf J}$ be the semigroup generated by $J_s$ and $I$ be the ideal consisting of all elements of $J$ not in $J_s$.  If $I$ is empty, then ${\bf J}$ is a completely simple semigroup.  If $I$ is nonempty, and $J_s$ contains an idempotent, then ${\bf J}/I$ is a completely $0$-simple semigroup.  If $I$ contains no idempotent, then ${\bf J}/I$ is a null semigroup\up: all products equal $0$.
\end{thm}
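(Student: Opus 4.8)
The plan is to recognise ${\bf J}/I$ (or ${\bf J}$ itself, when $I$ is empty) as the \emph{principal factor} of $s$ in ${\bf S}$ and to classify it via Green's relations. First I would record the basic geometry of $J=\langle J_s\rangle$. Every element of $J$ is a product of elements of $J_s$, and since $xy\leq_{\mathcal{J}}x$ and $xy\leq_{\mathcal{J}}y$ for all $x,y$, every element of $J$ is $\mathcal{J}$-below $J_s$. Hence $J=J_s\dotcup I$ with $I=\{x\in J: x<_{\mathcal{J}}J_s\}$, and $I$ is an ideal of ${\bf J}$ because any product involving an element of $I$ stays strictly $\mathcal{J}$-below $J_s$. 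Thus ${\bf J}/I$ is well defined, with underlying set $J_s\cup\{0\}$ and multiplication $\bar a\,\bar b=\overline{ab}$ when $ab\in J_s$ and $\bar a\,\bar b=0$ otherwise; this is exactly the principal factor, and when $I=\emptyset$ we have ${\bf J}=J_s$.

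Next I would settle the null case using the location of products (Howie, Chapter~2), valid here because ${\bf S}$ is finite, so that $\mathcal{J}=\mathcal{D}$ and the relations are stable. For $a,b\in J_s$ the product $ab$ lies in $J_s$ if and only if $ab\in R_a\cap L_b$, which happens if and only if $L_a\cap R_b$ contains an idempotent. Consequently, if $J_s$ contains no idempotent then every product of two elements of $J_s$ falls into $I$; in ${\bf J}/I$ all products are then $0$, so ${\bf J}/I$ is null. (Here $I\neq\emptyset$ is automatic: the finite semigroup ${\bf J}$ must contain an idempotent, while $J_s$ does not.)

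For the remaining cases assume $J_s$ is regular, i.e.\ contains an idempotent; this is automatic when $I=\emptyset$, since then ${\bf J}=J_s$ is a finite semigroup. The crux is to show that for all $a,b\in J_s$ there exist $u,v\in J$ with $uav=b$. Writing $a\mathrel{\mathcal{R}}c\mathrel{\mathcal{L}}b$ for some $c\in J_s$ (using $\mathcal{J}=\mathcal{D}=\mathcal{R}\circ\mathcal{L}$) and choosing inverses $a^{*},c^{*}\in J_s$ of $a,c$, one checks that $a(a^{*}c)=c$ and $(bc^{*})c=b$, because the idempotents $aa^{*}$ and $c^{*}c$ act as the appropriate one-sided identities on $R_c$ and $L_b$. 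Setting $v:=a^{*}c$ and $u:=bc^{*}$, both in $J$, gives $uav=b$. Hence in ${\bf J}/I$ every nonzero element generates the whole algebra as an ideal, and $({\bf J}/I)^2\neq\{0\}$ since $\bar e\,\bar e=\bar e\neq 0$ for an idempotent $e\in J_s$; so ${\bf J}/I$ is $0$-simple when $I\neq\emptyset$, and ${\bf J}=J_s$ is simple when $I=\emptyset$. Finally, as ${\bf S}$ is finite, the classical fact quoted above that finite simple (respectively $0$-simple) semigroups are completely simple (respectively completely $0$-simple) upgrades these to the stated conclusions.

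I expect the main obstacle to be the displayed claim in the previous paragraph: the definition of $\mathcal{J}$ only furnishes connecting multipliers from $S^{1}$, whereas $0$-simplicity of ${\bf J}/I$ requires them to live inside ${\bf J}$. Regularity of $J_s$ is exactly what removes this difficulty, since the inverses and idempotents witnessing the Green relations then lie in $J_s\subseteq J$, keeping the one-sided translations within ${\bf J}$.
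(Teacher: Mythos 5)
Your proof is correct, but there is no proof in the paper to compare it with: this theorem is stated there as a known classical fact (``one of the fundamental tools in semigroup theory''), with the reader directed to Howie's text for the underlying theory. What you have written is essentially the standard textbook argument for classifying principal factors: finiteness gives stability, hence $\mathcal{J}=\mathcal{D}$ and the equivalence $ab\in J_s \Leftrightarrow ab\in R_a\cap L_b$; the Clifford--Miller location theorem then settles the null case; regularity of $J_s$ (when it contains an idempotent) supplies inverses inside $J_s$ and hence the multipliers $u=bc^{*}$, $v=a^{*}c$ witnessing ($0$-)simplicity; and the classical Rees result that finite ($0$-)simple semigroups are completely ($0$-)simple (itself quoted without proof in the paper) finishes the job. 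Two features of your write-up deserve comment. First, the paper's ${\bf J}$ is the subsemigroup generated by $J_s$, not the principal ideal $S^{1}sS^{1}$ of the textbook treatment, so the connecting multipliers must be manufactured inside $J$ rather than taken from $S^{1}$; your construction does exactly this, and you correctly identify it as the crux. Second, you have tacitly repaired a typo in the statement: read literally, ``$I$ contains no idempotent'' is impossible for nonempty $I$ (a nonempty ideal of a finite semigroup is itself a finite semigroup and so contains an idempotent) and contradicts the first case when $I=\emptyset$; the intended hypothesis is that $J_s$ contains no idempotent, which is what you prove, noting correctly that $I\neq\emptyset$ is then automatic.
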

Note that when $J_s$ is the minimum ideal for example, then $I$ is empty, so that the minimum ideal of a finite semigroup is always a completely simple semigroup (of course it may be a degenerate, such as a single multiplicative $0$).  

Theorem \ref{thm:Rees} is one of the fundamental tools in semigroup theory.  We use it here to illustrate some further basic fact that will be used later in the article. The facts are well known to researchers in semigroup varieties, though the author is not aware of a location where they have been spelt out explicitly.

\begin{thm}\label{thm:c0s}
A completely $0$-simple semigroup $M^0[{\bf G},P]$ generates a variety containing a proper $3$-nilpotent semigroup if and only if $P$ contains a $0$ entry.
\end{thm}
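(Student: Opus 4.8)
The plan is to prove the two implications separately, and I begin with the converse. Suppose $P$ has no $0$ entry. Then a product of two nonzero elements of ${\bf S}:=M^0[{\bf G},P]$ is never $0$, so the nonzero elements form a subsemigroup and $0$ is merely an adjoined, isolated zero. Computing powers in Rees coordinates, $x=(i,g,\lambda)$ gives $x^{t}=(i,(gP_{\lambda i})^{t-1}g,\lambda)$, so if $m$ is the exponent of ${\bf G}$ then $(gP_{\lambda i})^{m}=1$ and hence $x^{m+1}=x$; together with $0^{m+1}=0$ this shows ${\bf S}\models x^{m+1}\approx x$, an identity of index $1$, which is therefore inherited by all of $\mathsf{HSP}({\bf S})$. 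But a proper $3$-nilpotent semigroup contains a nonzero product $z=bc$, and $z^{2}$, being a product of length $4$, equals $0\neq z$; since $x^{m+1}\approx x$ forces the implication $x^{2}=0\Rightarrow x=0$, no proper $3$-nilpotent semigroup can satisfy it. Hence $\mathsf{HSP}({\bf S})$ contains no proper $3$-nilpotent semigroup.

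For the forward direction fix a zero entry $P_{\lambda i}=0$. Since no column of $P$ is all zero there is a row $\mu$ with $P_{\mu i}\neq 0$, and since no row is all zero there is a column $k$ with $P_{\lambda k}\neq 0$. The crux is the claim that ${\bf S}$ satisfies \emph{no} identity $x_1x_2\approx w$ in which $w$ is a word of length at least $3$ in $x_1,x_2$. Granting this, let ${\bf F}:=F_{\mathsf{HSP}({\bf S})}(x_1,x_2)$ be the relatively free semigroup on two generators, which is finite because ${\bf S}$ is. The claim says precisely that the element $x_1x_2$ of ${\bf F}$ does not lie in the ideal ${\bf F}^{3}$. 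The Rees quotient ${\bf F}/{\bf F}^{3}$ is then $3$-nilpotent, since every product of three elements collapses into ${\bf F}^{3}$, yet it is not $2$-nilpotent, since the image of $x_1x_2$ is a nonzero product of length $2$. As ${\bf F}/{\bf F}^{3}\in\mathsf{HSP}({\bf S})$, it is the required proper $3$-nilpotent semigroup.

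Proving the claim is the step I expect to be the main obstacle, and it is exactly here that the zero entry does the work. Because column $i$ carries a nonzero entry at row $\mu$ and a zero entry at row $\lambda$, I can form the idempotent $\varepsilon:=(i,P_{\mu i}^{-1},\mu)$ together with $\pi:=(i,1,\lambda)$, which satisfy $\varepsilon^{2}=\varepsilon$, $\varepsilon\pi=\pi$, and $\pi\varepsilon=\pi^{2}=0$ (the last two holding because $P_{\lambda i}=0$). Suppose ${\bf S}\models x_1x_2\approx w$ with $|w|\geq 3$. Substituting $x_1\mapsto\varepsilon$ and $x_2\mapsto\pi$ evaluates the left-hand side to $\varepsilon\pi=\pi\neq 0$; but within $\langle\varepsilon,\pi\rangle$ any occurrence of $\pi$ before the final position annihilates the product, so $w(\varepsilon,\pi)\neq 0$ forces $w=x_1^{a}x_2$ with $a\geq 2$. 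Now substitute $x_1\mapsto\pi$ and $x_2\mapsto(k,1,\tau)$ for an arbitrary $\tau$: the left-hand side becomes $(i,P_{\lambda k},\tau)\neq 0$, whereas $w$ evaluates to $\pi^{a}(k,1,\tau)=0$ because $\pi^{2}=0$. This contradiction proves the claim.

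Finally, it is worth explaining why the passage to a quotient is genuinely necessary rather than simply producing a proper $3$-nilpotent \emph{subsemigroup} of a power of ${\bf S}$: already for the five-element Brandt semigroup $B_2$ (where $P$ is a $2\times 2$ identity matrix) any two elements of a power whose squares vanish have an idempotent product, so no such subsemigroup can exist and the detour through ${\bf F}/{\bf F}^{3}$ is essential.
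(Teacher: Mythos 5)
Your proof is correct, and while your treatment of the ``no zero entry'' direction coincides with the paper's (both derive the identity $x^{p+1}\approx x$, with $p$ the exponent of ${\bf G}$, and observe that it fails in every proper $3$-nilpotent semigroup), your forward direction takes a genuinely different route. The paper argues by explicit construction: it first collapses the group coordinate, passing to the quotient $M^0[{\bf 1},P']$, then uses the zero entry to produce idempotents $e,f$ and an element $a$ with $ea=af=a$ and $ae=aa=0$, and finally checks that in the subsemigroup of $M^0[{\bf 1},P']\times M^0[{\bf 1},P']$ generated by $(e,a)$ and $(a,f)$, the tuples with a zero coordinate form an ideal whose Rees quotient is proper $3$-nilpotent. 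You instead prove the syntactic claim that no identity $x_1x_2\approx w$ with $|w|\geq 3$ holds in $M^0[{\bf G},P]$ (an isoterm-style argument, with the zero entry exploited through your two substitutions via $\varepsilon$ and $\pi$), and then obtain the witness abstractly as ${\bf F}/{\bf F}^3$, where ${\bf F}$ is the two-generated relatively free semigroup of the variety. Your route needs the standard facts that ${\bf F}$ lies in $\mathsf{HSP}({\bf S})$ and that equality in ${\bf F}$ corresponds to satisfaction of identities, but in exchange it isolates the general principle at work: a semigroup's variety contains a proper $3$-nilpotent semigroup if and only if the semigroup satisfies no identity $x_1x_2\approx w$ with $|w|\geq 3$. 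The paper's construction verifies this same principle by hand in a concrete two-generated subsemigroup of a square; in fact both witnesses are Rees quotients of subsemigroups of powers of ${\bf S}$, since ${\bf F}$ embeds in ${\bf S}^{S^2}$, so the proofs are cousins, but the organisation is genuinely different. One caution: your closing aside about the Brandt semigroup is dispensable and its justification is too quick --- the zero of a hypothetical $3$-nilpotent subsemigroup of a power need not be the coordinatewise zero, and the relevant elements need not square to that zero --- though the conclusion stated there is true; since it plays no role in the argument, this does not affect correctness.
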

\begin{proof}
If $P$ contains no nonzero elements then $M^0[{\bf G},P]$ satisfies $x^{p+1}\approx x$, where $p$ is the exponent of the group ${\bf G}$.  This law fails on any proper $3$-nilpotent semigroup.  

Now assume that $P$ contains a $0$ entry, say $P_{i,\lambda}\in G$.   Factor by the congruence whose equivalence classes are $\{0\}$ along with the blocks $\{(i,g,\lambda)\mid g\in G\}$ for each $(i,\lambda)\in I\times \Lambda$.  Then we have a semigroup isomorphic to $M^0[{\bf 1},P']$ where ${\bf 1}$ is the one element group on $\{1\}$, and $P'$ is the matrix $P$ with all nonzero elements replaced by $1$.  Now let $P_{\lambda,i}$ be a nonzero entry in $P$.  Let $j,\rho$ be such that $P_{\rho,i}$ and $P_{\lambda,j}$ are nonzero, which exists because each row and column of $P$ has a nonzero entry.  Let $e$ denote the element $(i,1,\rho)$, $f$ denote $(j,1,\lambda)$ and $a$ denote $(i,1,\lambda)$.  Then $ea=a=af$, while $ae=aa=0$.  Also $ee=e$ and $ff=f$.    In the square $M^0[{\bf 1},P']\times M^0[{\bf 1},P']$, consider the subsemigroup ${\bf M}$ generated by $(e,a),(a,f)$.  Now $(e,a)(a,f)=(a,a)$, but because $ae=aa=0$, every other product produces a tuple with $0$ in a coordinate.  These elements with a $0$ coordinate form an ideal $J$, and  ${\bf M}/J$ is a proper $3$-nilpotent semigroup.
\end{proof}
Recall that a semigroup is \emph{regular} if for every $s$ there is a $t$ such that $sts=s$.  This is equivalent to every $\mathcal{J}$-class containing an idempotent (cf.~Theorem \ref{thm:Rees}).  A semigroup is \emph{completely regular} if every element lies within a subgroup, which is equivalent to every $\mathcal{J}$-class being a completely simple semigroup.
\begin{thm}\label{thm:regular}
A finite regular semigroup generates a variety containing a proper $3$-nilpotent semigroup if and only if it is not completely regular.
\end{thm}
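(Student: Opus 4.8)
The plan is to deduce Theorem \ref{thm:regular} from the already-established Theorems \ref{thm:Rees} and \ref{thm:c0s}, handling the two implications separately. For the substantive direction I would show that if $\mathbf S$ is regular but \emph{not} completely regular, then some principal factor of $\mathbf S$ is a completely $0$-simple semigroup $M^0[\mathbf G,P]$ whose sandwich matrix $P$ contains a $0$ entry; Theorem \ref{thm:c0s} then yields a proper $3$-nilpotent semigroup in the variety generated by that factor, and since the factor lies in $\mathsf{HS}(\mathbf S)$ and hence in $\mathsf V(\mathbf S)$, the proper $3$-nilpotent semigroup lies in $\mathsf V(\mathbf S)$ as well. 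For the converse I would argue that complete regularity forces an index-$1$ identity on $\mathbf S$, hence on all of $\mathsf V(\mathbf S)$, and that no proper $3$-nilpotent semigroup can satisfy such an identity.

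For the forward direction, since $\mathbf S$ is not completely regular there is an element $s$ lying in no subgroup, equivalently whose $\mathcal H$-class $H_s$ contains no idempotent. As $\mathbf S$ is regular, the $\mathcal J$-class $J_s$ contains an idempotent, so by Theorem \ref{thm:Rees} the principal factor of $J_s$ is completely simple or completely $0$-simple. It cannot be completely simple, since a completely simple semigroup is a union of groups, forcing every $\mathcal H$-class of $J_s$ — in particular $H_s$ — to be a group, contrary to the choice of $s$. Hence the principal factor is completely $0$-simple, say $M^0[\mathbf G,P]$, with the nonzero $\mathcal H$-classes indexed by pairs $(i,\lambda)\in I\times\Lambda$; a short computation shows that the $\mathcal H$-class at $(i,\lambda)$ contains an idempotent exactly when $P_{\lambda,i}\neq 0$. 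Since $H_s$ corresponds to one such pair and is not a group, the associated entry of $P$ is $0$, so $P$ has a $0$ entry and Theorem \ref{thm:c0s} applies.

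For the converse, suppose $\mathbf S$ is completely regular, so each element lies in a subgroup. Taking $e$ to be the least common multiple of the exponents of the maximal subgroups of $\mathbf S$, every $x$ satisfies $x^{e}=x^{\omega}$ and hence $x^{e+1}=x$, giving $\mathbf S\models x\approx x^{e+1}$ with $e+1\geq 2$; this identity passes to every member of $\mathsf V(\mathbf S)$. If $\mathbf T$ were a proper $3$-nilpotent semigroup in $\mathsf V(\mathbf S)$, then for each $t\in T$ we would have $t=t^{e+1}=(t^{e+1})^{e+1}=t^{(e+1)^2}$, and since $(e+1)^2\geq 4$ the right-hand side is a product of length at least $3$ and so equals $0$. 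This forces $t=0$ for every $t$, contradicting properness, and shows that $\mathsf V(\mathbf S)$ contains no proper $3$-nilpotent semigroup.

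I expect the main obstacle to be the bookkeeping in the forward direction: one must align Green's relations and idempotents of $J_s$ inside $\mathbf S$ with those of the Rees matrix representation $M^0[\mathbf G,P]$ of its principal factor, in order to translate the statement ``$H_s$ is not a group'' into ``$P$ has a $0$ entry.'' This is where the structural content of Rees's theorem is genuinely used; once it is in place, the remaining work is the short identity computation of the converse.
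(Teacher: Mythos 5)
Your proof is correct and takes essentially the same route as the paper: the forward direction passes through Theorem \ref{thm:Rees} to obtain a completely $0$-simple principal factor with a zero entry in its sandwich matrix and then invokes Theorem \ref{thm:c0s}, while the converse uses the index-$1$ identity $x\approx x^{p+1}$ of a completely regular semigroup. The only cosmetic difference is how the zero entry of $P$ is detected --- the paper notes that $s^2\notin J_s$, so $s^2=0$ in the principal factor, whereas you note that $H_s$ contains no idempotent; these are the same structural fact in different clothing.
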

\begin{proof}
If ${\bf S}$ is completely regular of period $p$, then it satisfies $x^{p+1}\approx x$, which fails on any proper $3$-nilpotent semigroup.  Now assume that ${\bf S}$ is a finite regular semigroup containing an element $s$ with $s^{p+1}\neq s$.  Then $s^2$ is not in the $\mathcal{J}$-class of $s$, showing that the quotient ${\bf J}/I$ of Theorem \ref{thm:Rees} (built from $J_s$) is a completely $0$-simple semigroup.  Because $s^2=0$ in ${\bf J}/I$ it follows that when presented as a Rees matrix semigroup $M^0[{\bf G},P]$, the matrix $P$ must contain a zero entry.
\end{proof}

\section{Nilpotent and monogenic semigroups}\label{sec:nil}
While $2$-nilpotent semigroups do not exhibit any interesting properties, proper $3$-nilpotent semigroups present an interesting jump in the complexity of certain algebraic properties: a proper $3$-nilpotent semigroup generates a residually large variety (Golubov and Sapir \cite{golsap}, Kublanovski \cite{kub}, or McKenzie \cite{mck}); no proper $3$-nilpotent semigroup has a finite basis for its quasi-identities (Jackson and Volkov \cite{jacvol09}); and $3$-nilpotent semigroups provide a key role in undecidability results relating to membership problems \cite{HKMST,jacvol09b}.  We now add to this list of  complex behaviour by showing that they are inherently nondualisable.  We mention that the template ${\bf T}$ originated in the proof of early versions of the following theorem, using the notion of a ``homotopy'', in the style of \cite{jacvol09}.

\begin{theorem}\label{thm:nilvar}
Let ${\bf S}$ be a finite semigroup whose variety contains a proper $3$-nilpotent semigroup.  Then ${\bf S}$ is inherently nondualisable.
\end{theorem}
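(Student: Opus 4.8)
The plan is to derive the inherent nondualisability of $\mathbf S$ from Theorem \ref{thm:T}, applied not to $\mathbf S$ itself but to a suitably chosen finite member of the quasivariety $\mathsf{ISP}(\mathbf S)$. The key preliminary observation is that inherent nondualisability is upward closed under quasivariety containment: if $\mathbf M\in\mathsf{ISP}(\mathbf S)$ is a finite IND algebra, then $\mathbf S$ is IND, since $\mathbf S\in\mathsf{ISP}(\mathbf C)$ for some finite dualisable $\mathbf C$ would give $\mathbf M\in\mathsf{ISP}(\mathbf S)\subseteq\mathsf{ISP}(\mathbf C)$, contradicting the inherent nondualisability of $\mathbf M$. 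So it suffices to exhibit a single finite $\mathbf M\in\mathsf{ISP}(\mathbf S)$ to which Theorem \ref{thm:T} applies.

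The cleanest such witness is a finite $\mathbf M\in\mathsf{ISP}(\mathbf S)$ carrying a two-sided zero $0$ together with elements $\mathsf a,\mathsf c$ for which $\mathsf a\mathsf c\neq 0$ (more generally, an idempotent $\mathsf f$ and elements $\mathsf a,\mathsf c$ with $\mathsf a\mathsf f=\mathsf f=\mathsf f\mathsf c$ and $\mathsf a\mathsf c\neq\mathsf f$). Given such an $\mathbf M$, set $\mathsf b=\mathsf d=\mathsf f$ ($=0$ in the first case) and $\mathsf e=\mathsf a\mathsf c$; the products of the template $\mathbf T$ then hold with $\mathsf e\neq\mathsf f$, and the geometric condition of Theorem \ref{thm:T} is immediate. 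Indeed, suppose the ghost $g=\mathsf f_\infty^{\mathsf e}$ lay in the subalgebra generated by the line generators, say $g=W$ for a semigroup word $W$ in finitely many of them, and let $\mathcal L$ be the finite set of lines occurring in $W$. Since $\mathcal P$ is infinite its lines are infinite (as used already in the proof of Theorem \ref{thm:T}), so finitely many lines cannot cover $\mathsf P$ and we may pick a point $p_0$ on none of the lines of $\mathcal L$. At $p_0$ every line generator takes its base value $\mathsf b$ or $\mathsf d$, both equal to $\mathsf f$, so $W(p_0)$ is a nonempty product of copies of $\mathsf f$, hence $W(p_0)=\mathsf f$; but $g(p_0)=\mathsf e\neq\mathsf f$, a contradiction. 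Thus $g\notin\langle\text{line generators}\rangle$ and $\mathbf M$ is IND.

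The whole difficulty is therefore concentrated in manufacturing such an $\mathbf M$ inside $\mathsf{ISP}(\mathbf S)$, and this is the step I expect to be the main obstacle. The hypothesis yields a finite proper $3$-nilpotent semigroup $\mathbf N\in\mathsf{HSP}(\mathbf S)$, hence a finite $\mathbf B\leq\mathbf S^{k}$ and a surjection $\phi\colon\mathbf B\to\mathbf N$; the set $I=\phi^{-1}(0)$ is an ideal of $\mathbf B$, and there are $\alpha,\gamma\in B$ with $\alpha\gamma\notin I$ while every product of length at least three lies in $I$ (in particular $(\alpha\gamma)^2\in I$). The off-all-lines argument above works verbatim for $\mathbf M=\mathbf B$ as soon as $\mathsf e=\alpha\gamma\notin I$ and the degenerate letters $\mathsf b,\mathsf d$ are chosen inside $I$, because a nonempty product of elements of the ideal $I$ remains in $I$ whereas $\mathsf e\notin I$. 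What is \emph{not} automatic is the exactness demanded by Theorem \ref{thm:T}: the three degenerate products $\mathsf a\mathsf d$, $\mathsf b\mathsf c$, $\mathsf b\mathsf d$ must coincide in a single element $\mathsf f$. The zero of $\mathbf N$ pulls back only to the whole ideal $I$, not to one absorbing element of $\mathbf B$, and forming the Rees quotient $\mathbf B/I$ to create a genuine zero is a homomorphic image and so leaves $\mathsf{ISP}(\mathbf S)$. This variety-versus-quasivariety gap is genuine: for a group with a nonabelian Sylow subgroup the quasivariety contains only groups and hence no zero at all, which is precisely why that case is handled separately (Theorem \ref{thm:group}) via the commutator interpretation of $\mathbf T$ rather than a nilpotent one.

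To close the gap I would locate a zero already present in the semigroup structure of $\mathbf S$ rather than create one by collapsing. The presence of a proper $3$-nilpotent in the variety forces $\mathbf S$ to fail complete regularity (its nonzero product $\bar a\bar c$ satisfies $(\bar a\bar c)^2=0$, so no law $x\approx x^{1+p}$ can hold), whence by the Rees theory (Theorem \ref{thm:Rees}) some principal factor is a null semigroup or a completely $0$-simple $M^0[\mathbf G,P]$ with a zero entry in $P$ (cf.\ Theorems \ref{thm:c0s} and \ref{thm:regular}); such factors literally contain a zero element and a nonzero product. The task is then to realise an idempotent local zero $\mathsf f$ with $\mathsf a,\mathsf c$ satisfying $\mathsf a\mathsf f=\mathsf f=\mathsf f\mathsf c\neq\mathsf a\mathsf c$ inside a subsemigroup of a finite power of $\mathbf S$, imitating the square construction in the proof of Theorem \ref{thm:c0s}, where complementary coordinates annihilate the bad products so that a genuine zero tuple appears within $\mathsf{ISP}$ — using idempotents of the kernel together with the zero entry of the sandwich matrix to force the three degenerate products onto the single idempotent $\mathsf f$. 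Once $\mathbf M$ is secured, Theorem \ref{thm:T} gives that $\mathbf M$ is IND, and the upward-closure observation of the first paragraph gives that $\mathbf S$ is IND. The delicate combinatorial construction of $\mathsf f$ inside the quasivariety, bypassing the forbidden Rees quotient, is the heart of the argument and the part I expect to require the most care.
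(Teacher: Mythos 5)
Your proposal contains a fatal error at the exact point where it claims the work is easy, and it leaves unproved the step it correctly identifies as hard. The error: your verification of the geometric condition misreads the ghost element. In the paper's notation, $g=\mathsf{f}_{\infty}^{\mathsf{e}}$ is the tuple whose value is $\mathsf{e}$ at the single coordinate $\infty$ and $\mathsf{f}$ at \emph{every} point of $\mathsf{P}$. Your ``off-all-lines'' argument picks a point $p_0$ on none of the lines occurring in a word $W$ and correctly computes $W(p_0)=\mathsf{f}$, but then asserts $g(p_0)=\mathsf{e}$; in fact $g(p_0)=\mathsf{f}$, so there is no contradiction (the same misreading undermines your ideal version for $\mathbf{B}$: both $W(p_0)$ and $g(p_0)$ lie in $I$). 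Worse, the condition you call ``immediate'' is genuinely false at your level of generality. Take $\mathbf{M}$ to be the two-element semilattice $\{0,1\}$: it has a zero, an idempotent $\mathsf{f}=0$, and $\mathsf{a}=\mathsf{c}=1$ with $\mathsf{a}\mathsf{c}=1\neq 0$, so it meets all your hypotheses; yet for any three non-concurrent lines $L_1,L_2,L_3$ the product $0_{\infty,L_1}^{1,1}\,0_{\infty,L_2}^{1,1}\,0_{\infty,L_3}^{1,1}$ equals $1$ at $\infty$ and $0$ on all of $\mathsf{P}$, i.e.\ it \emph{is} the ghost --- as it must be, since the two-element semilattice is dualisable. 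This is precisely the paper's warning that many interpretations of $\mathbf{T}$ ``fail the technical condition''. The true mechanism, and the place where $3$-nilpotence actually enters, is a length bound at coordinate $\infty$: every product of three or more line generators lies in $J=\nu^{-1}(0)$ at $\infty$ (because all length-$3$ products vanish in $\mathbf{N}$), and single generators give $\mathsf{a}$ or $\mathsf{c}$, so any generated $h$ with $h(\infty)=\mathsf{e}$ is a product of \emph{exactly two} line generators; such a product repeats its $\infty$-value at the intersection point $q$ of the two lines, giving $h(q)=\mathsf{e}\neq\mathsf{f}=g(q)$, whence $h\neq g$. Nothing goes wrong off the lines; what matters is the point where the two lines meet.

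The second gap is that the construction of the witness $\mathbf{M}$ --- which you yourself flag as ``the heart of the argument'' --- is only a plan, and the plan aims at a stronger target than necessary. Theorem \ref{thm:T} does not require $\mathsf{b}=\mathsf{d}=\mathsf{f}$, and relaxing exactly this is how the paper closes the variety-versus-quasivariety gap you correctly diagnose. The paper takes a finite $\mathbf{M}\in\mathsf{SP}(\mathbf{S})$ with a surjection $\nu\colon\mathbf{M}\to\mathbf{N}$, generated by preimages $\mathsf{a},\mathsf{c}$ of $a,c$; chooses a word $u$ over $\{\mathsf{a},\mathsf{c}\}$ whose value lies in the minimum ideal; forms the idempotent $v:=(\mathsf{a}^{\omega}u\mathsf{c}^{\omega})^{\omega}$, which satisfies $v=\mathsf{a}^{\omega}v=v\mathsf{c}^{\omega}$; and sets $\mathsf{b}:=\mathsf{a}^{\omega+1}v$, $\mathsf{d}:=v\mathsf{c}^{\omega+1}$, $\mathsf{f}:=\mathsf{a}v\mathsf{c}$. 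Associativity then yields $\mathsf{a}\mathsf{d}=\mathsf{b}\mathsf{c}=\mathsf{b}\mathsf{d}=\mathsf{f}$, with $\mathsf{f}\in\nu^{-1}(0)$ and $\mathsf{e}=\mathsf{a}\mathsf{c}\notin\nu^{-1}(0)$, so $\mathbf{T}$ is interpreted with $\mathsf{e}\neq\mathsf{f}$ entirely inside $\mathsf{SP}(\mathbf{S})$: no single absorbing element, and no forbidden Rees quotient, is ever needed. Your upward-closure observation in the first paragraph is correct and matches the paper's reduction, but with the ghost argument broken and the witness never built, the proposal does not constitute a proof.
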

\begin{proof}
Let ${\bf N}$ be a proper $3$-nilpotent semigroup in $\mathsf{HSP}({\bf S})$, and that $a,c,e\in N$ are such that $ac=e\neq 0$ (possibly $a=c$ however we must have $e\notin\{a,c\}$ as $e\neq 0$, and $e=a$ implies $acc=ec=ac=e\neq 0$, with a similar contradiction to $3$-nilpotence if $e=c$).  We may assume without loss of generality that ${\bf N}$ is generated by $a,c$.  Let ${\bf M}\in\mathsf{SP}({\bf S})$ be such that there is a surjective homomorphism $\nu:{\bf M}\to {\bf N}$.  By the definition of inherent nondualisability, it will suffice to show that ${\bf M}$ is inherently nondualisable.

Select any $\mathsf{a}\in\nu^{-1}(a)$ and $\mathsf{c}\in\nu^{-1}(c)$ and set $\mathsf{e}:=\mathsf{a}\mathsf{c}\in\nu^{-1}(e)$.  Because ${\bf N}$ is finite we may assume that ${\bf M}$ is finite, and moreover, because ${\bf N}$ is generated by $a,c$ we may assume that ${\bf M}$ also is generated by $\mathsf{a}$ and $\mathsf{c}$.  Let $I$ be the minimum ideal of ${\bf M}$.  As ${\bf M}$ is generated by $\mathsf{a},\mathsf{c}$, there is a word $u$ in the alphabet $\{\mathsf{a},\mathsf{c}\}$ such that the product $u$ lies in $I$.  Now observe that $(\mathsf{a}^\omega u \mathsf{c}^\omega)^\omega$ is also in $I$ and is additionally an idempotent element.  We let this element be denoted by $v$.  Now as $\mathsf{a}^\omega$ and $\mathsf{c}^\omega$ are idempotents that occur at either end of the product $v$, we have $v=\mathsf{a}^\omega v=v\mathsf{c}^\omega$.  Thus $\mathsf{a}v=\mathsf{a}\mathsf{a}^\omega v=\mathsf{a}^{\omega+1} v$ and $v\mathsf{c}=v\mathsf{c}^\omega \mathsf{c}=v\mathsf{c}^{\omega+1}$, giving
\begin{equation}
\mathsf{a}^{\omega+1}v\mathsf{c}^{\omega+1}=
\mathsf{a}v\mathsf{c}^{\omega+1}=\mathsf{a}^{\omega+1}v\mathsf{c}=\mathsf{a}v\mathsf{c}\tag{$\star$}\label{avc}
\end{equation}

Now we may complete the interpretation of ${\bf T}$ into ${\bf M}$.  To the existing choice of $\mathsf{a},\mathsf{c},\mathsf{e}$ with $\mathsf{a}\mathsf{c}=\mathsf{e}$, add $\mathsf{b}:=\mathsf{a}^{\omega+1}v$ and $\mathsf{d}:=v\mathsf{c}^{\omega+1}$ and $\mathsf{f}:=\mathsf{a}v\mathsf{c}$.  To see that this is a valid interpretation note that using Equation \eqref{avc} and the idempotence of $v$ we have 
\begin{align*}
\mathsf{a}\cdot \mathsf{d}&=\mathsf{a}v\mathsf{c}^{\omega+1}=\mathsf{f}\\
\mathsf{b}\cdot\mathsf{c}&=\mathsf{a}^{\omega+1}v\mathsf{c}=\mathsf{f}\text{ and }\\
\mathsf{b}\cdot\mathsf{d}&=\mathsf{a}^{\omega+1}vv\mathsf{c}^{\omega+1}=\mathsf{a}^{\omega+1}v\mathsf{c}^{\omega+1}=\mathsf{f}.
\end{align*}
Now we need to verify that the ghost element $g$ of Theorem \ref{thm:T} cannot be generated by the line generators in ${\bf M}^{\mathsf{P}_\infty}$.  First let $J$ denote $\nu^{-1}(0)$.  Next, let $h$ be any element of ${\bf M}^{\mathsf{P}_\infty}$ that is generated by line generators and which has $h(\infty)=\mathsf{e}$.  We show that there is a point $q$ in $\mathsf{P}$ such that $h(q)=\mathsf{e}$ also, showing that $g\neq h$.  

Now $h$ must arise as a product of exactly two line generators.  This is because at the coordinate $\infty$, the line generators equal either $\mathsf{a}$ or $\mathsf{c}$.  Because any product of length $3$ or more in ${\bf N}$ equals $0$, so too must any product of length three or more lie in the ideal $J$ of ${\bf M}$.  Because $h(\infty)\notin J\cup\{\mathsf{a},\mathsf{c}\}$ it arises as a product of exactly two line generators.  Let $L,K$ be the lines corresponding to the two line generators, and let $q$ be any point on $L\wedge K$.  Then $h(q)=h(\infty)$ as claimed.  Thus Theorem \ref{thm:T} applies to show that ${\bf M}$ is inherently nondualisable as required.
\end{proof}
It would be interesting if the requirement of associativity of ${\bf S}$ in this theorem could be dropped.  In the current proof, associativity is being used heavily to identify the elements $\mathsf{b},\mathsf{d},\mathsf{f}$.  For a general finite binar (algebra with single binary operation), one can identify the corresponding notion of ``minimum ideal'' $I$ and identify some product---now requiring bracketing---lying in it \cite{jactro}.  But it is not obvious how to obtain something like the equalities in \eqref{avc}, which appear to require something like associativity.  The next theorem circumvents this by assuming that there is a minimum ideal that behaves nicely.
\begin{theorem}\label{thm:nilvarbinar}
Let ${\bf B}$ be a finite binar with a multiplicative $0$ element and such that the variety generated by ${\bf B}$ contains a proper $3$-nilpotent semigroup.  Then ${\bf B}$ is inherently nondualisable.
\end{theorem}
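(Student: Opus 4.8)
The plan is to run the template machinery of Theorem~\ref{thm:T} in almost exactly the way it was run for Theorem~\ref{thm:nilvar}, but to exploit the multiplicative $0$ of $\mathbf{B}$ so as to avoid the associativity-dependent identities \eqref{avc}. Concretely, I would first observe that it suffices to exhibit a single finite $\mathbf{M}\in\mathsf{ISP}(\mathbf{B})$ that is inherently nondualisable: if $\mathbf{B}$ lay in the quasivariety of some finite dualisable $\mathbf{D}$, then so would $\mathbf{M}$, a contradiction. From the hypothesis, choose a proper $3$-nilpotent semigroup in the variety of $\mathbf{B}$ together with elements $a,c$ with $ac=e\neq 0$; passing to the subalgebra $\langle a,c\rangle$ I may assume the $3$-nilpotent semigroup $\mathbf{N}=\langle a,c\rangle$ is finite, and exactly as in Theorem~\ref{thm:nilvar} one has $e\notin\{a,c\}$. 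The aim is then to interpret $\mathbf{T}$ in a suitable $\mathbf{M}$ using the \emph{trivial} assignment $\mathsf{b}=\mathsf{d}=\mathsf{f}=0$, $\mathsf{e}=\mathsf{a}\mathsf{c}$, which is precisely the first worked example following the statement of $\mathbf{T}$ and needs only that $0$ be a multiplicative zero.

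The key construction is the algebra $\mathbf{M}$ carrying the interpretation. Since $\mathbf{N}\in\mathsf{HSP}(\mathbf{B})$ there is a subalgebra $\mathbf{R}\leq\mathbf{B}^I$ and a surjection $\phi:\mathbf{R}\to\mathbf{N}$; choosing preimages $\mathsf{a},\mathsf{c}$ of $a,c$ and replacing $\mathbf{R}$ by $\langle\mathsf{a},\mathsf{c}\rangle$ I may assume $\mathbf{R}=\langle\mathsf{a},\mathsf{c}\rangle$. I then set $\mathbf{M}=\langle \mathsf{a},\mathsf{c},\mathbf{0}\rangle$, where $\mathbf{0}$ is the constant tuple whose value is the multiplicative zero of $\mathbf{B}$, so that $\mathbf{0}$ is a multiplicative zero of $\mathbf{M}$ and $\mathbf{M}$ is finite as a finitely generated subalgebra of a power of the finite algebra $\mathbf{B}$. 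The point at which the multiplicative-zero hypothesis does the work of \eqref{avc} is the extension of $\phi$ to a surjection $\nu:\mathbf{M}\to\mathbf{N}$ with $\nu(\mathbf{0})=0$: because any binar term in which $\mathbf{0}$ occurs collapses to $\mathbf{0}$ (and likewise $0$ collapses every term of $\mathbf{N}$ in which it appears), the set $M$ is just $R\cup\{\mathbf{0}\}$ and the rule $\nu\!\upharpoonright_R=\phi$, $\nu(\mathbf{0})=0$ is forced to be a well-defined homomorphism. Setting $\mathsf{e}:=\mathsf{a}\mathsf{c}$ gives $\nu(\mathsf{e})=e\neq 0$, hence $\mathsf{e}\neq \mathbf{0}$, and the assignment $\mathsf{b}=\mathsf{d}=\mathsf{f}=\mathbf{0}$ verifies the four products of $\mathbf{T}$ with $\mathsf{e}\neq\mathsf{f}$.

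With the interpretation in place, the substance of the argument is verifying the geometric hypothesis of Theorem~\ref{thm:T}: that the ghost $g=\mathbf{0}_\infty^{\mathsf{e}}$ is not generated by the line generators $\{\mathbf{0}_{\infty,L}^{\mathsf{a},\mathsf{a}},\mathbf{0}_{\infty,L}^{\mathsf{c},\mathsf{c}}\mid L\text{ a line}\}$ in $\mathbf{M}^{\mathsf{P}_\infty}$. I would argue exactly as in Theorem~\ref{thm:nilvar}: if $h$ lies in this subalgebra with $h(\infty)=\mathsf{e}$, then reading $h$ at $\infty$ expresses $h(\infty)$ as a binar term in the generators' $\infty$-values, each of which is $\mathsf{a}$ or $\mathsf{c}$; applying $\nu$ and using that every product of three or more elements of $\mathbf{N}$ vanishes (so $e$, being nonzero, cannot be such a product) together with $e\notin\{a,c\}$ forces $h$ to be a product of exactly two line generators $X\cdot Y$ associated with lines $L,K$. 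Then at any point $q$ on $L\wedge K$ one has $h(q)=X(q)\cdot Y(q)=h(\infty)=\mathsf{e}\neq\mathbf{0}$, so $h\neq g$. Hence $g$ is not generated, Theorem~\ref{thm:T} shows $\mathbf{M}$ is inherently nondualisable, and therefore so is $\mathbf{B}$.

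The step I expect to be the main obstacle is the compatibility bookkeeping in the construction of $\mathbf{M}$ and $\nu$: one must ensure the \emph{same} algebra simultaneously possesses a genuine multiplicative zero (to license the trivial interpretation of $\mathbf{T}$) and a surjection onto the $3$-nilpotent $\mathbf{N}$ (to import the vanishing of long products needed for the ghost-avoidance argument), with the zero sent to the zero of $\mathbf{N}$. Everything else transfers essentially verbatim from the semigroup case, since the geometric argument never used associativity of $\mathbf{M}$ itself---only the collapse of products of length three or more inside the associative quotient $\mathbf{N}$.
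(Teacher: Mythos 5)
Your proposal is correct and takes essentially the same approach as the paper's (sketched) proof: adjoin the constant zero tuple to a finite two-generated preimage $\mathbf{M}\in\mathsf{SP}(\mathbf{B})$ of $\mathbf{N}$, extend $\nu$ by sending that tuple to $0$, interpret $\mathbf{T}$ via $\mathsf{b}=\mathsf{d}=\mathsf{f}=(0,\dots,0)$, and run the simplified ghost-avoidance argument from the proof of Theorem~\ref{thm:nilvar} to invoke Theorem~\ref{thm:T}. You merely fill in details the paper leaves implicit (well-definedness of the extended $\nu$, finiteness of $\mathbf{M}$, and the length-two analysis at the coordinate $\infty$).
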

\begin{proof}
The proof is very similar to that of Theorem \ref{thm:nilvar}, so we give only a sketch.
Let ${\bf N}$ be a proper $3$-nilpotent semigroup in $\mathsf{HSP}({\bf B})$, and that $a,c,e\in N$ are such that $ac=e\neq 0$.  As in the proof of Theorem \ref{thm:nilvar} we may assume that ${\bf N}$ is generated by $a,c$, that there is a finite ${\bf M}\in\mathsf{SP}({\bf B})$, a  surjective homomorphism $\nu:{\bf M}\to {\bf B}$ and element $\mathsf{a}\in\nu^{-1}(a)$, $\mathsf{c}\in\nu^{-1}(c)$ with ${\bf M}$ generated by $\mathsf{a},\mathsf{c}$.  In a slight deviation to the proof of Theorem \ref{thm:nilvar}, observe that if the tuple $(0,\dots,0)$ is not already in $M$, then it may be added, and the homomorphism $\nu$ extended by setting $\nu((0,\dots,0)):=0$.  Thus we now adjust ${\bf M}$ if necessary, by assuming that it does contain $(0,\dots,0)$.  Now let $\mathsf{b}=\mathsf{d}=\mathsf{f}:=(0,\dots,0)$.  This gives an interpretation of ${\bf T}$, and the remainder of the argument is essentially a simplified version of the final stages of the application of Theorem \ref{thm:T} in the proof of Theorem~\ref{thm:nilvar}.
\end{proof}
Theorem \ref{thm:nilvarbinar} implies Theorem 2.5 of \cite{BDPW} (a fact that is alluded to in Remark 2.6 of \cite{BDPW}).  Conversely, it appears that some cases where we apply Theorem \ref{thm:T} can alternatively be obtained using embellishments of the constructions in the proof of \cite[Theorem 2.5]{BDPW}, though not when, for example, $\mathsf{a}=\mathsf{c}$.

To finish this section we give some corollaries to Theorem \ref{thm:nilvar}.
\begin{cor}\label{cor:nil}
If ${\bf S}$ is a finite proper $k$-nilpotent semigroup for $k>2$, then ${\bf S}$ is inherently nondualisable.
\end{cor}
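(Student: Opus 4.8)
The plan is to reduce immediately to Theorem~\ref{thm:nilvar}. Since that theorem already establishes inherent nondualisability for any finite semigroup whose variety contains a proper $3$-nilpotent semigroup, it suffices to exhibit a proper $3$-nilpotent member of $\mathsf{HSP}({\bf S})$. As ${\bf S}$ is itself $k$-nilpotent with $k\geq 3$, the natural candidate is a Rees quotient of ${\bf S}$ by an ideal built from long products, so that all products of length $3$ are killed while a product of length $2$ survives.

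Concretely, for each $j\geq 1$ let $I_j$ denote the set of all products of $j$ or more elements of $S$. Each $I_j$ is an ideal (multiplying an element of $I_j$ by any element only lengthens the underlying word), and $k$-nilpotence gives $I_k=\{0\}$; in particular $0\in I_3$, so the Rees quotient ${\bf S}/I_3$ is defined. Every product of three elements of ${\bf S}/I_3$ is the image of a length-$3$ product of $S$, which lies in $I_3$ and is therefore collapsed to $0$; hence ${\bf S}/I_3$ is $3$-nilpotent. Being a homomorphic image of ${\bf S}$, it lies in $\mathsf{HSP}({\bf S})$.

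The one point requiring a genuine argument is \emph{properness} of ${\bf S}/I_3$, that is, that it is not $2$-nilpotent; equivalently that some length-$2$ product of $S$ lies outside $I_3$, i.e.\ $I_2\neq I_3$. Here I would argue by contradiction along the descending chain $S\supseteq I_2\supseteq I_3\supseteq\dots\supseteq I_k=\{0\}$, using the identity $I_{j+1}=SI_j$. An equality $I_2=I_3$ then propagates: $I_3=SI_2=SI_3=I_4$, and inductively $I_2=I_3=\dots=I_k=\{0\}$, forcing ${\bf S}$ to be $2$-nilpotent. But a proper $k$-nilpotent semigroup with $k>2$ cannot be $2$-nilpotent, since $2$-nilpotence would imply $(k-1)$-nilpotence (as $k-1\geq 2$), contradicting properness. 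Hence $I_2\neq I_3$, so ${\bf S}/I_3$ has a nonzero product of length $2$ and is a proper $3$-nilpotent semigroup.

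With the proper $3$-nilpotent semigroup ${\bf S}/I_3\in\mathsf{HSP}({\bf S})$ in hand, Theorem~\ref{thm:nilvar} applies directly to give inherent nondualisability of ${\bf S}$. I expect the only real obstacle to be the properness step just described; the $k=3$ case is degenerate, with $I_3=\{0\}$ and ${\bf S}/I_3\cong{\bf S}$, and the rest is bookkeeping about lengths of products of words.
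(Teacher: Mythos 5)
Your proposal is correct, and it follows the paper's reduction exactly in its overall shape: quotient ${\bf S}$ by the ideal of all elements expressible as products of length at least $3$, note the quotient is $3$-nilpotent and lies in $\mathsf{HSP}({\bf S})$, and invoke Theorem~\ref{thm:nilvar}. Where you genuinely differ is in the one substantive step, the properness of the quotient. The paper argues directly with a witness: since ${\bf S}$ is not $(k-1)$-nilpotent there are $a_1,\dots,a_{k-1}$ with $a_1\cdots a_{k-1}\neq 0$; if $a_1a_2$ were equal to some product $b_1b_2b_3$, then $a_1\cdots a_{k-1}=b_1b_2b_3a_3\cdots a_{k-1}$ would be a product of $k$ factors and hence $0$, a contradiction, so $a_1a_2$ survives as a nonzero length-$2$ product in the quotient. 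Your argument is instead an indirect chain-stabilization one: from $I_{j+1}=SI_j$ you show that $I_2=I_3$ would propagate to $I_2=I_k=\{0\}$, making ${\bf S}$ $2$-nilpotent and contradicting properness. Both are valid (your identity $I_{j+1}=SI_j$ does hold, since any product of at least $j+1$ factors can be rebracketed as a single factor times a product of at least $j$ factors). The paper's version is shorter and exhibits an explicit nonzero element of the quotient, while yours avoids choosing a witness at the cost of a short induction along the chain $I_2\supseteq I_3\supseteq\dots\supseteq I_k$; either way the corollary follows.
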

\begin{proof}
Let $I$ be the ideal of ${\bf S}$ consisting of all elements that cannot be written as a product of length $3$.  Then ${\bf S}/I$ is a $3$-nilpotent semigroup.  It is a \emph{proper} $3$-nilpotent semigroup because, by assumption, there are elements  $a_1,\dots,a_{k-1}$ such that $a_1\dots a_{k-1}\neq 0$.  Then the element $a_1a_2$ cannot be equal to a product $b_1b_2b_3$ because of the contradiction $0=b_1b_2b_3a_3\dots a_{k-1}=a_1\dots a_{k-1}\neq 0$.  Thus ${\bf S}$ is inherently nondualisable by Theorem \ref{thm:nilvar}.
\end{proof}

Kleitman, Rothschild and Spencer \cite{klerotspe} showed that the proportion of all $n$-element semigroups that are proper 3-nilpotent approaches $1$ as $n\to\infty$.  Thus we obtain the following corollary to Theorem \ref{thm:nilvar}.
\begin{cor}\label{cor:almostall}
As $n\to\infty$, the proportion of all $n$-element semigroups which are dualisable approaches $0$.
\end{cor}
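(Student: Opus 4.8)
The plan is to combine the enumeration result of Kleitman, Rothschild and Spencer \cite{klerotspe} with the nondualisability established earlier in this section. The cited result asserts that the proportion of all $n$-element semigroups that are proper $3$-nilpotent tends to $1$ as $n\to\infty$; in other words, proper $3$-nilpotent semigroups account for asymptotically almost all finite semigroups. The only additional ingredient needed is the observation that every such semigroup fails to be dualisable, and this is already available to us.

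First I would record that a proper $3$-nilpotent semigroup ${\bf S}$ is inherently nondualisable. This is immediate from Theorem \ref{thm:nilvar}, since the variety $\mathsf{HSP}({\bf S})$ trivially contains the proper $3$-nilpotent semigroup ${\bf S}$ itself; equivalently it is the $k=3$ instance of Corollary \ref{cor:nil}. As inherent nondualisability means that ${\bf S}$ lies in the quasivariety of no finite dualisable algebra, and ${\bf S}$ certainly lies in its own quasivariety $\mathsf{ISP}({\bf S})$, it follows in particular that ${\bf S}$ is nondualisable. Combining the two facts then finishes the argument by a simple counting estimate: writing $d(n)$ for the number of dualisable $n$-element semigroups and $t(n)$ for the total number, every proper $3$-nilpotent semigroup is nondualisable, so $d(n)$ is bounded above by the number of $n$-element semigroups that are \emph{not} proper $3$-nilpotent. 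By \cite{klerotspe} this latter count is $o(t(n))$, whence $d(n)/t(n)\to 0$, which is exactly the claim.

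There is no substantial obstacle here, as both halves of the argument are already in place; the only point requiring a moment's care is the routine passage from inherent nondualisability to nondualisability noted above. A minor bookkeeping issue is whether semigroups are counted up to isomorphism or as labelled multiplication tables, but the conclusion is insensitive to this choice, since the asymptotic dominance of proper $3$-nilpotent semigroups holds in the form stated in \cite{klerotspe} and transfers directly to the dualisable proportion under either convention.
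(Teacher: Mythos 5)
Your proposal is correct and takes essentially the same route as the paper: the paper derives the corollary immediately from the Kleitman--Rothschild--Spencer result that almost all $n$-element semigroups are proper $3$-nilpotent, combined with Theorem \ref{thm:nilvar} (equivalently the $k=3$ case of Corollary \ref{cor:nil}), exactly as you do. Your explicit counting estimate and the remark that inherent nondualisability implies nondualisability are just the spelled-out version of the paper's one-line argument.
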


\begin{cor}\label{cor:index}
A semigroup is inherently nondualisable if it has index more than~$2$.
\end{cor}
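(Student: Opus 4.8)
The plan is to reduce immediately to Theorem~\ref{thm:nilvar} by producing a proper $3$-nilpotent semigroup inside the variety $\mathsf{HSP}({\bf S})$. The starting observation is that the index of ${\bf S}$, as defined, is exactly $\max_{s\in S} i_s$, where $i_s$ denotes the index of the monogenic subsemigroup $\langle s\rangle$: for a single element $s$ the law $x^{i}\approx x^{i+p}$ holds on $s$ precisely when $i\geq i_s$ and $p$ is a multiple of the period of $s$, and a common period $p>1$ can always be chosen across all elements. Hence if the index of ${\bf S}$ exceeds $2$, I may fix an element $s$ whose monogenic subsemigroup $\langle s\rangle$ has index $i_s\geq 3$.

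Next I would analyse $\langle s\rangle$, which is the monogenic semigroup with index $i_s\geq 3$ and some period $p_s$, so that its elements are the powers $s,s^2,\dots$ subject only to $s^{i_s+p_s}=s^{i_s}$; in particular the tail powers $s,\dots,s^{i_s-1}$ are pairwise distinct and distinct from every higher power. Consider the set $I:=\{s^k\mid k\geq 3\}$. Because $i_s\geq 3$, this is a proper ideal of $\langle s\rangle$ that omits both $s$ and $s^2$, and the Rees quotient $\langle s\rangle/I$ has exactly the three elements $s,s^2,0$. Every product of three of the generators $s,s^2$ has total degree at least $3$ and therefore lands in $I$, so all length-$3$ products vanish; on the other hand $s\cdot s=s^2\neq 0$. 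Thus $\langle s\rangle/I$ is a proper $3$-nilpotent semigroup.

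Since $\langle s\rangle\leq {\bf S}$ and $\langle s\rangle/I\in\mathsf{HS}(\langle s\rangle)\subseteq \mathsf{HSP}({\bf S})$, the variety generated by ${\bf S}$ contains a proper $3$-nilpotent semigroup, and Theorem~\ref{thm:nilvar} applies to give inherent nondualisability. I do not anticipate any genuine obstacle here: the only points needing care are the identification of the index with $\max_{s} i_s$ and the verification that $I$ is a proper ideal whose quotient is genuinely $3$-nilpotent rather than $2$-nilpotent (which is exactly where the hypothesis $i_s\geq 3$, as opposed to $i_s\geq 2$, is used).
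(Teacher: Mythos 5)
Your proposal is correct and follows essentially the same route as the paper: both arguments pick an element whose monogenic subsemigroup has index at least $3$, factor that subsemigroup by the ideal of powers with exponent greater than $2$ to obtain a proper $3$-nilpotent quotient, and then invoke Theorem~\ref{thm:nilvar}. Your additional remarks (identifying the index of ${\bf S}$ with $\max_s i_s$ and checking that $s^2\notin I$) are accurate elaborations of details the paper leaves implicit.
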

\begin{proof}
Let ${\bf S}$ be a finite semigroup with index $i>2$ and period $p$.  Thus there is an element $a\in S$ such that $a^i=a^{i+p}$ but $a^{i-1}\neq a^{i-1+p}$.  Consider the subsemigroup $\langle a\rangle$ generated by $a$, and factor by the ideal $\{a^j\mid j>2\}$.  This quotient is a proper $3$-nilpotent semigroup.  Thus ${\bf S}$ is inherently nondualisable by Theorem \ref{thm:nilvar}.
\end{proof}

\section{Nilpotent groups}
Quackenbush and Szab\'o \cite{quasza} showed that any finite group containing a nonabelian Sylow subgroup is nondualisable, and as observed in \cite{jac03}, their proof in fact shows inherent nondualisability, though it predates the Inherent Nondualisability Lemma~\ref{INDL}.  We now reprove this result by demonstrating an interpretation of ${\bf T}$.  (We mention that Bentz and Mayr \cite{benmay} have recently shown how to obtain this as a special case of a much more general result concerning supernilpotence in congruence modular varieties.)

\begin{thm} \up(Quackenbush and Szab\'o \cite{quasza}.\up)\label{thm:group}
A finite group is inherently nondualisable if it contains a nonabelian Sylow subgroup.
\end{thm}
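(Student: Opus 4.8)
The plan is to invoke Theorem~\ref{thm:T} with the binary term operation taken to be the group commutator $x\cdot y:=[x,y]=x^{-1}y^{-1}xy$. First I would reduce: if $\mathbf P$ is a nonabelian Sylow $p$-subgroup of $\mathbf G$, then $\mathbf P$ is a subalgebra of $\mathbf G$ and hence lies in $\mathsf{ISP}(\mathbf G)$; since inherent nondualisability passes upward in the quasivariety order (if $\mathbf G\in\mathsf{ISP}(\mathbf D)$ for a finite dualisable $\mathbf D$ then $\mathbf P\in\mathsf{ISP}(\mathbf G)\subseteq\mathsf{ISP}(\mathbf D)$, contradicting that $\mathbf P$ is inherently nondualisable), it suffices to prove every nonabelian finite $p$-group is inherently nondualisable. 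Choosing $\mathsf a,\mathsf c\in P$ with $\mathsf e:=[\mathsf a,\mathsf c]\neq 1$ and setting $\mathsf b=\mathsf d=\mathsf f:=1$ interprets $\mathbf T$, since $\mathsf a\cdot\mathsf c=\mathsf e$ while $\mathsf a\cdot\mathsf d=\mathsf b\cdot\mathsf c=\mathsf b\cdot\mathsf d=1=\mathsf f$ and $\mathsf e\neq\mathsf f$. By Theorem~\ref{thm:T} it then remains to show that the ghost $g=\mathsf f_{\infty}^{\mathsf e}=1_{\infty}^{\mathsf e}$ is not generated in $\mathbf P^{\mathsf P_\infty}$ by the line generators $1_{\infty,L}^{\mathsf a,\mathsf a}$ and $1_{\infty,L}^{\mathsf c,\mathsf c}$.

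To tame the commutator combinatorics I would next pass to a nilpotent class-$2$ section. As $\mathbf P$ is a nonabelian $p$-group, $\gamma_2(\mathbf P)\supsetneq\gamma_3(\mathbf P)$ and $\gamma_2/\gamma_3$ is generated by images of commutators, so I may choose $\mathsf a,\mathsf c$ with $[\mathsf a,\mathsf c]\notin\gamma_3(\mathbf P)$. Applying $\mathbf P\to\mathbf P/\gamma_3(\mathbf P)$ coordinatewise gives a homomorphism $\mathbf P^{\mathsf P_\infty}\to(\mathbf P/\gamma_3(\mathbf P))^{\mathsf P_\infty}$ carrying line generators to line generators and $g$ to the corresponding ghost, which remains nontrivial because $[\mathsf a,\mathsf c]\notin\gamma_3(\mathbf P)$. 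After one further central quotient I may assume that, in the class-$2$ group so obtained, the subgroup $\mathbf K:=\langle\mathsf a,\mathsf c\rangle$ has commutator subgroup $\mathbf K'=\langle z\rangle$ cyclic of order $p$ and central, where $z=[\mathsf a,\mathsf c]\neq 1$. Since all generator entries lie in $\{\mathsf a,\mathsf c,1\}\subseteq\mathbf K$, it is enough to prove the ghost is not generated by the line generators inside $\mathbf K^{\mathsf P_\infty}$.

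Now suppose for contradiction that $g$ is such a product of line generators and inverses. This presents $g$ by a finite list of ``tokens'', each carrying a line $L_k$, a value $v_k\in\{\mathsf a,\mathsf c\}$ and a sign $\epsilon_k$, involving only finitely many lines $L_1,\dots,L_r$. At a coordinate $q\in\mathsf P$ only the tokens whose line contains $q$ contribute (the rest take value $1$), so the ordered product of these ``active'' tokens is $g(q)=1$; at $\infty$ every token is active and the ordered product is $g(\infty)=z$. A point lying on exactly one used line—a \emph{generic} point, of which each used line has infinitely many since each other used line removes at most one point—forces the ordered product $\tau_L$ of the tokens on each used line $L$ to equal $1$. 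Working modulo $\langle z\rangle$ and using that commutators are central, the $\infty$-product equals $z^{D}$, where $D\in\mathbb Z/p$ is the signed ``linking number'' counting the mixed pairs (one $\mathsf a$-token, one $\mathsf c$-token) drawn from \emph{distinct} used lines, the relations $\tau_L=1$ annihilating the same-line contribution. Each pair of distinct used lines meets in a unique point $q=L_k\wedge L_l$, so $D=\sum_q D_q$ over the finitely many intersection points; the relation at $q$, again via $\tau_L=1$ for every line $L$ through $q$, forces $D_q\equiv 0\pmod p$. Hence $D\equiv 0$, contradicting $g(\infty)=z=z^{1}$. Thus $g$ is not generated, and Theorem~\ref{thm:T} yields that $\mathbf P$, and so $\mathbf G$, is inherently nondualisable.

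The step I expect to be most delicate is the bookkeeping of the central $z$-exponent: defining the linking invariant $D$ precisely, verifying that the same-line pairs are killed by the relations $\tau_L=1$, and checking that $D$ localises as $\sum_q D_q$ so that each projective intersection contributes a separately vanishing term. The two reductions—taking $[\mathsf a,\mathsf c]\notin\gamma_3(\mathbf P)$ so the ghost survives passage to the class-$2$ section, and arranging $\mathbf K'\cong\mathbb Z_p$ so that the count is a genuine $\bmod\,p$ invariant—are what keep this bookkeeping clean, and they are precisely where the nonabelianness of the Sylow subgroup is used, namely that $\mathsf a$, $\mathsf c$ and $\mathsf e$ may all be taken to be $p$-elements.
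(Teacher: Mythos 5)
Your proposal is correct and follows essentially the same route as the paper's proof: interpret ${\bf T}$ via the commutator term with $\mathsf{b}=\mathsf{d}=\mathsf{f}=1$ and $\mathsf{e}=[\mathsf{a},\mathsf{c}]\neq 1$, reduce to a nilpotency class~$2$ $p$-group setting, and rule out the ghost by splitting any product of line generators into per-line blocks (forced to be trivial by evaluating at points lying on just one used line) and cross-line central commutators (forced to be trivial by evaluating at the intersection points). The differences are ones of execution only: the paper reaches class~$2$ by passing to the subgroup $\langle a,c\rangle$ with $[a,c]$ central, rather than by your quotients by $\gamma_3$ and by $\langle z^p\rangle$ (equally valid, since non-membership of the ghost may be checked in a homomorphic image), and it tracks the tuples $w_i$, $v_{i,j}$ elementwise rather than via your mod-$p$ linking number, whose localisation $D=\sum_q D_q$ incidentally handles three or more concurrent used lines more explicitly than the paper's presentation does.
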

\begin{proof}
Let ${\bf G}$ be any finite group containing a nonabelian Sylow subgroup.  We first show that we may assume extra conditions on ${\bf G}$ without loss of generality.  Indeed, it will suffice to prove inherent nondualisability for a minimal subgroup of ${\bf G}$ containing a nonabelian Sylow subgroup, in which case, we may assume that ${\bf G}$ is its own Sylow subgroup, of order $p^k$ for some $k>1$ and prime $p$.  Moreover, we can assume that ${\bf G}$ is $2$-generated and nilpotency class $2$; indeed, if the nilpotency class is $k>2$, then there are elements $a,c$ such that $e:=[a,c]\in Z({\bf G})\backslash\{1\}$, and we may consider the subgroup generated by $\{a,c\}$.  

We apply Theorem \ref{thm:T} with $\mathsf{a}=a$, $\mathsf{c}=c$, $\mathsf{e}=[a,c]=e$ and $\mathsf{b}=\mathsf{d}=\mathsf{f}=f$.  Observe that in a nilpotent group of nilpotency class $2$, the term reduct to the commutator operation is a proper $3$-nilpotent semigroup: all commutator products of length $3$ equal $1$, yet there are elements $a,c$ with $[a,c]=e\neq 1$.  So in fact we are in the same situation as in the proof of Theorem \ref{thm:nilvar}, except that as the commutator is not the fundamental operation, we need to revisit the proof that the ghost element $g=1_{\infty}^e$ is not in the subgroup ${\bf A}$ of ${\bf G}^{\mathsf{P}_\infty}$ generated by the line generators $1_{\infty,L}^{a,a}$ and $1_{\infty,L}^{c,c}$ (for lines $L$).

%
%

Consider any $h\in A$ with the property that $h(q)=1$ for all points $q\in \mathsf{P}$.  We show that $h(\infty)=1$ also, showing that $g\notin A$ as required.

First consider $h$ written as a product of line generators.  Each line generator is built over a line from $\mathcal{P}$ (with each line giving rise to two generators) so we may let $L_1,\dots,L_k$ be an enumeration of the lines involved in expressing $h$ as a product of line generators.  Using the law $xy=[x,y]yx$ (which holds for any group) and centrality of commutators (which holds in ${\bf G}$ and in ${\bf G}^{\mathsf{P}_\infty}$ as they are nilpotent of class 2), we may arrange the line generators in this product so that the line generators over $L_1$ appear first, followed by the line generators over $L_2$ and so on, up to the line generators over $L_k$, followed by a product of commutators arising from applications of $xy=[x,y]yx$.  The order of appearance of line generators over any individual line $L_i$ does not change.  For example, the product $(1_{\infty,L_1}^{a,a})(1_{\infty,L_3}^{a,a})(1_{\infty,L_1}^{c,c})(1_{\infty,L_1}^{a,a})(1_{\infty,L_2}^{c,c})$ (with $k=3$) would become 
\begin{multline*}
(1_{\infty,L_1}^{a,a})(1_{\infty,L_1}^{c,c})(1_{\infty,L_1}^{a,a})(1_{\infty,L_2}^{c,c})(1_{\infty,L_3}^{a,a})\\
[(1_{\infty,L_3}^{a,a}),(1_{\infty,L_1}^{c,c})][(1_{\infty,L_3}^{a,a}),(1_{\infty,L_1}^{a,a})][(1_{\infty,L_3}^{a,a}),(1_{\infty,L_2}^{c,c})].
\end{multline*}
(Of course, commutators such as $[(1_{\infty,L_3}^{a,a}),(1_{\infty,L_1}^{a,a})]$ will equal the constant sequence equal to $1$ on all coordinates, however we ignore this and will eventually show that $h$ itself is constantly equal to $1$ also.)
In the case of $h$, this product will be abbreviated as 
\[
w_1w_2\dots w_k v_{1,2}v_{1,3}\dots v_{1,k} v_{2,3}\dots v_{k-1,k},
\]
where $w_i$ is a product of line generators over $L_i$, while $v_{i,j}$ is a (possibly empty) product of commutators between the line generators $L_i$ and $L_j$ (where $i\neq j$: because we do not change the order of appearance of different line generators over the same line $L_i$, all commutators  produced in the rearrangement involve two distinct lines).  Notice that $w_i(q)=1$ unless $q\in L_i$, while $v_{i,j}(q)=1$ unless $q=L_i\wedge L_j$.  Also, for $q\in L_i$ we have $w_i(q)=w_i(\infty)$ and for $q=L_i\wedge L_j$, we have $v_{i,j}(q)=v_{i,j}(\infty)$.  We now show that in fact $w_i(\infty)=v_{i,j}(\infty)=1$ also.

For each $i=1,\dots,k$, let $q_i$ be any point on $L_i$ that is not on the other lines $L_1,\dots,L_{i-1},L_{i+1},\dots,L_k$.  At any point $q\in \mathsf{P}$, the value of $w_i(q)$ is either $1$ (if $q\notin L_i$) or $w_i(q)=w_{i}(q_i)$.  However, because $v_{j,k}(q_i)=1$ for any $j\neq k$ (because $q_i$ is on  the line $L_i$ only) we have $w_{i}(q_i)=h(q_i)=1$.  Thus, as $w_i(\infty)=w_i(q_i)=1$, we have that $w_i$ is constantly equal to $1$.   As $i$ was arbitrary, we have that the product $w_1\dots w_k$ is also constantly equal to~$1$.  Next, for each $i\neq j$, let $q_{i,j}:=L_i\wedge L_j$.  Then $1=h(q_{i,j})=v_{i,j}(q_{i,j})=v_{i,j}(\infty)$.  At all other points $q\neq q_i$ we have $v_{i,j}(q)=1$ as well, thus $v_{i,j}$ and hence $v_{1,2}v_{1,3}\dots v_{1,k} v_{2,3}\dots v_{k-1,k}$ is constantly $1$.  Thus $h$ is the constant sequence equal to $1$, as claimed.
\end{proof}

We now observe that Theorem \ref{thm:group} extends to the semigroup variety setting for trivial reasons.
\begin{thm}\label{thm:groupvar}
Let ${\bf S}$ be a finite semigroup whose variety contains a group with a nonabelian Sylow subgroup.  Then ${\bf S}$ contains a subgroup with a nonabelian Sylow subgroup and hence is inherently nondualisable by Theorem \ref{thm:group}.
\end{thm}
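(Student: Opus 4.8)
The plan is to show that every finite group lying in $\mathsf{HSP}({\bf S})$ is governed by the maximal subgroups (group $\mathcal{H}$-classes) of ${\bf S}$, so that the presence in the variety of a group with a nonabelian Sylow subgroup forces one of these maximal subgroups to have a nonabelian Sylow subgroup as well. So let ${\bf G}\in\mathsf{HSP}({\bf S})$ be a finite group with a nonabelian Sylow subgroup. First I would reduce to a finite power of ${\bf S}$. Since ${\bf S}$ is finite, $\mathsf{HSP}({\bf S})$ is locally finite; as ${\bf G}$ is finitely generated, any witnessing subsemigroup ${\bf U}\leq {\bf S}^I$ admitting a surjection onto ${\bf G}$ may be replaced by the subsemigroup ${\bf U}'$ generated by preimages of a generating set of ${\bf G}$, which is finite and still maps onto ${\bf G}$. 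Collapsing together all coordinates $i\in I$ whose projection $\rho_i$ restricts to the same map on the finite set ${\bf U}'$ then embeds ${\bf U}'$ into a finite power ${\bf S}^n$. Thus ${\bf G}$ divides ${\bf S}^n$, in the sense of being a homomorphic image of a subsemigroup of ${\bf S}^n$.

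The key semigroup-theoretic ingredient I would then invoke is the standard fact that a finite group dividing a finite semigroup already divides one of its maximal subgroups. Given a subsemigroup ${\bf T}\leq {\bf S}^n$ surjecting onto ${\bf G}$, pass to a subsemigroup ${\bf V}\leq{\bf T}$ minimal with respect to still mapping onto ${\bf G}$. A routine finiteness argument shows ${\bf V}$ is a group: any idempotent $f\in{\bf V}$ maps to $1_{\bf G}$, so $f{\bf V}f$ maps onto ${\bf G}$ and minimality gives $f{\bf V}f={\bf V}$, making $f$ a two-sided identity; and for each $v\in{\bf V}$ the one-sided ideals $v{\bf V}$ and ${\bf V}v$ again map onto ${\bf G}$, so minimality forces them to equal ${\bf V}$, yielding a two-sided inverse of $v$. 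Hence ${\bf V}$ is a subgroup of ${\bf S}^n$ and lies in a group $\mathcal{H}$-class of ${\bf S}^n$. Because Green's relations distribute over finite direct products, the group $\mathcal{H}$-classes of ${\bf S}^n$ are exactly the products $H_1\times\cdots\times H_n$ of maximal subgroups $H_1,\dots,H_n$ of ${\bf S}$. Therefore ${\bf G}$ divides such a product.

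To finish, I would use that the class $\mathcal{A}$ of finite groups all of whose Sylow subgroups are abelian is closed under subgroups, quotients and finite direct products: a Sylow subgroup of a subgroup (resp.\ quotient) is contained in (resp.\ an image of) a Sylow subgroup of the ambient group, and a Sylow subgroup of a finite direct product is the product of Sylow subgroups of the factors. Thus $\mathcal{A}$ is closed under division and finite products. Since ${\bf G}\notin\mathcal{A}$ yet ${\bf G}$ divides $H_1\times\cdots\times H_n$, not every $H_i$ can lie in $\mathcal{A}$; some maximal subgroup $H_i$ of ${\bf S}$ thus has a nonabelian Sylow subgroup. As $H_i$ is a subgroup of ${\bf S}$, this is precisely the asserted conclusion, and inherent nondualisability then follows immediately from Theorem \ref{thm:group}.

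I expect the only genuinely non-trivial obstacle to be the second paragraph: checking that a finite group dividing a finite semigroup divides a maximal subgroup, together with the identification of the group $\mathcal{H}$-classes of a power as products of group $\mathcal{H}$-classes of the factors. Both are classical facts of finite semigroup theory, so the work here is citation and bookkeeping rather than new argument, which is what makes the extension hold "for trivial reasons."
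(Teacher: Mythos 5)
Your proof is correct, but it takes a genuinely different route from the paper's. The paper argues through varieties of groups: it first invokes the fact (attributed to Ol$'$\u{s}hanski\u{\i}'s classification of residually large group varieties) that the variety generated by a finite group contains a group with a nonabelian Sylow subgroup only if the generating group itself has one, and then reduces the semigroup setting to the group setting via the observation that the groups in $\mathsf{HSP}({\bf S})$ are precisely the groups in the variety generated by the subgroups of ${\bf S}$ --- proved by a term trick: choose a term $u$ whose values always lie in the minimal ideal of the subsemigroup generated by the interpreted variables, and rewrite group identities by substituting $u^\omega x u^\omega$ for each variable $x$. You instead argue by division: ${\bf G}$ divides a finite power ${\bf S}^n$; a finite group dividing a finite semigroup divides one of its maximal subgroups (your minimal-subsemigroup argument is the standard proof of this classical, Krohn--Rhodes-style fact); maximal subgroups of ${\bf S}^n$ are products of maximal subgroups of ${\bf S}$; and the class of finite groups with all Sylow subgroups abelian is closed under division and finite direct products. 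Your route trades the paper's appeal to Ol$'$\u{s}hanski\u{\i} and its term manipulation for elementary Sylow theory plus classical finite semigroup structure theory, which makes it more self-contained; the paper's version is shorter given the facts it cites. One caveat: your stated justification that ``Green's relations distribute over finite direct products'' is false in general --- for instance $\mathcal{R}$ fails to distribute on the product of a two-element null semigroup with a two-element right zero semigroup --- but the special case you actually need is true: the maximal subgroup of ${\bf S}^n$ at an idempotent $(e_1,\dots,e_n)$ is $H_{e_1}\times\cdots\times H_{e_n}$, which follows by identifying the group $\mathcal{H}$-class $H_e$ with the group of units of the local monoid $e{\bf S}e$ and noting that units of a direct product of monoids are products of units. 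With that repair, your argument is complete.
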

\begin{proof}
It is well known that the variety generated by a finite group ${\bf G}$ contains a group with a nonabelian Sylow subgroup if and only if ${\bf G}$ has a nonabelian subgroup.  This can be proved directly, but can also be seen to be a consequence of results such as Ol$'$\u{s}hanski\u{\i}'s classification of when a group generates a residually large variety (which is if and only if it has a nonabelian Sylow subgroup \cite{ols}).  Next, use another well known fact: that the groups in the semigroup variety generated by a finite semigroup ${\bf S}$ are precisely the groups in the variety generated by the subgroups of ${\bf S}$.  To see why this is true, note that one may find a term $u$ that, under any evaluation of the variables in $u$ inside ${\bf S}$, takes values in the minimum ideal of the subsemigroup ${\bf S}$ generated by the variable interpretation.  (This is roughly the idea used in the proof of Theorem \ref{thm:nilvar}.)  Then group equations may be expressed by replacing variables $x$ by expressions of the form $u^\omega x u^\omega$.  Thus if the variety of ${\bf S}$ contains a group with a nonabelian Sylow subgroup, then ${\bf S}$ itself contains a subgroup with a nonabelian Sylow subgroup.
\end{proof}

\section{Completely simple semigroups}
The general theory of completely simple semigroups easily implies that a completely simple semigroup is isomorphic to the direct product of a group with a rectangular band if and only if the product of any two idempotents is idempotent (equivalently, idempotents form a subsemigroup), which in the periodic case is equivalent to satisfaction of the identity $(xy)^p\approx x^py^p$, where $p$ is the period.  In general, a regular semigroup in which the idempotents form a subsemigroup is known as an \emph{orthodox semigroup}.

We now show that any completely simple semigroup that fails to be orthodox interprets ${\bf T}$ in a way that enables application of Theorem \ref{thm:T}.
\begin{thm}\label{thm:cs}
Let $M[{\bf G},Q]$ be a completely simple semigroup that is not orthodox \up(equivalently, is not isomorphic to a direct product of ${\bf G}$ with a rectangular band\up).  Then $M[{\bf G},Q]$ is inherently nondualisable.
\end{thm}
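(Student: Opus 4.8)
The plan is to interpret $\mathbf{T}$ inside $M[\mathbf{G},Q]$ itself and then verify the hypothesis of Theorem \ref{thm:T}. Since $M[\mathbf{G},Q]$ is not orthodox, its idempotents do not form a subsemigroup, so I may choose idempotents $\mathsf{a},\mathsf{c}$ whose product $\mathsf{a}\mathsf{c}$ is not idempotent. Write $\mathsf{a}=(i_0,\ast,\lambda_1)$ and $\mathsf{c}=(i_2,\ast,\lambda_0)$ in Rees coordinates and set $\mathsf{e}:=\mathsf{a}\mathsf{c}$, which lies in the $\mathcal{H}$-class indexed by $(i_0,\lambda_0)$; let $\mathsf{f}:=(\mathsf{a}\mathsf{c})^\omega$ be the idempotent of that $\mathcal{H}$-class and put $\mathsf{b}=\mathsf{d}=\mathsf{f}$. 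Then $\mathsf{a}\mathrel{\mathcal{R}}\mathsf{f}$ (both have $I$-coordinate $i_0$) and $\mathsf{c}\mathrel{\mathcal{L}}\mathsf{f}$ (both have $\Lambda$-coordinate $\lambda_0$), so the standard idempotent relations give $\mathsf{a}\mathsf{f}=\mathsf{f}$ and $\mathsf{f}\mathsf{c}=\mathsf{f}$, while $\mathsf{f}\mathsf{f}=\mathsf{f}$ and $\mathsf{a}\mathsf{c}=\mathsf{e}$. This is precisely an interpretation of $\mathbf{T}$ with $\mathsf{e}\neq\mathsf{f}$, the inequality being exactly the non-idempotence of $\mathsf{a}\mathsf{c}$. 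By Theorem \ref{thm:T} it then remains to show that the ghost $g=\mathsf{f}_\infty^{\mathsf{e}}$ is not generated in $M[\mathbf{G},Q]^{\mathsf{P}_\infty}$ by the line generators $\mathsf{f}_{\infty,L}^{\mathsf{a},\mathsf{a}}$ and $\mathsf{f}_{\infty,L}^{\mathsf{c},\mathsf{c}}$.

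At this stage the argument must depart sharply from the one used for Theorem \ref{thm:nilvar}. There the ghost was excluded because any product of length at least $3$ of the generators fell into the ideal $\nu^{-1}(0)$, so that $\mathsf{e}$ could arise only as a two-fold product. That route is unavailable here: $M[\mathbf{G},Q]$ is completely regular and satisfies $x^{p+1}\approx x$, so its variety contains no proper $3$-nilpotent semigroup at all, and in particular $\mathsf{e}=\mathsf{e}^{\omega+1}$ is realised as a product of the generators of unbounded length. Thus exclusion of the ghost cannot be read off from word length and must instead come from the incidence geometry of the plane; this is the crux of the proof.

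To carry this out I would first normalise the $2\times 2$ block of $Q$ on rows $\{\lambda_1,\lambda_0\}$ and columns $\{i_0,i_2\}$ so that three of its entries equal $1$ and the fourth is a single element $q\neq 1$, of order $d\geq 2$ (the inequality $q\neq 1$ is again the non-orthodoxy). The purpose of this normalisation is that every group coordinate arising in a product of the generators then becomes a power of the single element $q$, hence lies in the abelian group $\langle q\rangle$ and can be tracked by an integer exponent modulo $d$. A direct evaluation of the Rees product shows that, for any product $h=Z_1\cdots Z_n$ of line generators and any coordinate $x\in\mathsf{P}_\infty$, the value $h(x)$ lies in the $\mathcal{H}$-class of $\mathsf{f}$ exactly when $Z_1$ is of $\mathsf{a}$-type and $Z_n$ of $\mathsf{c}$-type, and that in this case $h(x)=(i_0,q^{N_x},\lambda_0)$, where $-N_x$ counts the consecutive pairs $(Z_{k-1},Z_k)$ of type $(\mathsf{a},\mathsf{c})$ for which $x$ lies on both of the associated lines $L_{k-1}$ and $L_k$. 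Treating $\infty$ as incident with every line, at $x=\infty$ this counts \emph{every} such $(\mathsf{a},\mathsf{c})$-adjacency, whereas at a genuine point $p$ it counts only those adjacencies whose two lines meet at $p$.

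The conclusion is then a counting argument modulo $d$. If $h$ were the ghost, then $N_p\equiv 0\pmod d$ at every real point $p$ while $N_\infty\equiv -1\pmod d$, so that $-N_\infty$, the total number of $(\mathsf{a},\mathsf{c})$-adjacencies, is $\equiv 1\pmod d$; yet this total is redistributed over the real plane. I would first dispose of adjacencies formed by two consecutive generators over the \emph{same} line $L$: such an adjacency contributes $1$ to $-N_p$ at every point of $L$, so evaluating at a generic point of $L$ (which exists since $\mathcal{P}$ is infinite and such a point lies on no other relevant line) shows that the number of same-line adjacencies over $L$ is $\equiv 0\pmod d$. Each remaining adjacency, formed over two distinct lines, contributes $1$ to $-N_p$ at the unique point $p=L_{k-1}\wedge L_k$, so summing $N_p\equiv 0$ over the finite set of these intersection points forces the number of distinct-line adjacencies to be $\equiv 0\pmod d$ as well; hence $-N_\infty\equiv 0\pmod d$, contradicting $-N_\infty\equiv 1$. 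Making this redistribution bookkeeping precise—in particular checking that the same-line adjacencies contribute a multiple of $d$ to each finite test sum—is the main technical obstacle, but it is exactly the kind of incidence computation that the infiniteness of $\mathcal{P}$ is designed to supply.
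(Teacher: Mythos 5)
Your proposal is correct, and up to the ghost-exclusion step it coincides with the paper's proof: the paper likewise passes to the subsemigroup generated by two idempotents with non-idempotent product, normalised as ${\bf R}=M[{\bf C},Q]$ with ${\bf C}$ cyclic with generator $\gamma$ and $Q=\left(\begin{smallmatrix}1&1\\1&\gamma\end{smallmatrix}\right)$, and it interprets ${\bf T}$ with exactly your elements, namely $\mathsf{a}=(1,1,2)$, $\mathsf{c}=(2,1,1)$, $\mathsf{e}=\mathsf{a}\mathsf{c}=(1,\gamma,1)$ and $\mathsf{b}=\mathsf{d}=\mathsf{f}=(1,1,1)$, the idempotent of the $\mathcal{H}$-class of $\mathsf{e}$. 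The genuine divergence is in how the ghost is excluded: the paper works by semigroup identities rather than by your adjacency count. Having fixed (as you do) that the first generator in any product equalling $h$ is of $\mathsf{a}$-type and the last of $\mathsf{c}$-type, it invokes the absorption laws $\mathsf{b}_{\infty,L}^{\mathsf{a},\mathsf{a}}\mathsf{b}_{\infty,K}^{\mathsf{a},\mathsf{a}}=\mathsf{b}_{\infty,K}^{\mathsf{a},\mathsf{a}}$ and $\mathsf{d}_{\infty,L}^{\mathsf{c},\mathsf{c}}\mathsf{d}_{\infty,K}^{\mathsf{c},\mathsf{c}}=\mathsf{d}_{\infty,L}^{\mathsf{c},\mathsf{c}}$ to reduce to an alternating product of elements $w_{K,L}=\mathsf{f}_{\infty,K\wedge L}^{\mathsf{e},\mathsf{e}}$, which lie in the abelian group $\{(1,\gamma',1)\mid\gamma'\in C\}^{\mathsf{P}_\infty}$ and hence commute, yielding the normal form $(w_{q_1})^{n_1}\cdots(w_{q_k})^{n_k}(w_{L_1})^{m_1}\cdots(w_{L_{k'}})^{m_{k'}}$ with the $q_i$ pairwise distinct points and the $L_j$ pairwise distinct lines; evaluating at a generic point of each $L_j$ forces $m_j\equiv 0$ modulo the order of $\gamma$, after which evaluating at each $q_i$ forces $n_i\equiv 0$, so $h=\underline{\mathsf{f}}$. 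Your exponent formula $h(x)=(i_0,q^{N_x},\lambda_0)$ is exactly what this normal form encodes --- your same-line $(\mathsf{a},\mathsf{c})$-adjacencies over $L$ are the paper's $w_L$-factors and your distinct-line adjacencies are its $w_q$-factors --- so the two arguments have the same mathematical content, with yours trading the absorption-and-commute rewriting for direct Rees-coordinate bookkeeping that handles arbitrary, non-alternating products uniformly. What the paper's normal form buys is precisely the dissolution of the ``technical obstacle'' you flag: once $m_j\equiv 0$ is known, the $w_{L_j}$-factors equal $\underline{\mathsf{f}}$ outright and cannot contaminate the evaluations at intersection points. In your formulation the fill is nonetheless routine: your generic-point step shows each same-line count is individually $\equiv 0\pmod d$, so same-line adjacencies contribute a multiple of $d$ to any finite sum of the values $N_p$, and summing over the intersection points of the distinct-line adjacencies then gives that their total is $\equiv 0 \pmod d$, contradicting $N_\infty\equiv\pm1$ exactly as you intend.
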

\begin{proof}
In this case it is routine to show that $M[{\bf G},Q]$ contains a completely simple subsemigroup of the form ${\bf R}:=M[{\bf C},Q]$, where  ${\bf C}$ is a finite cyclic group with generator $\gamma$ and identity $1$ and $Q$ is the matrix 
$
\left(\begin{matrix} 1&1\\
1&\gamma\end{matrix}\right)
$   (see Sapir \cite{sap80} for example, but otherwise, just select any two idempotent elements whose product is not idempotent, and use these to generate a subsemigroup of $M[{\bf G},Q]$).
We show that ${\bf R}$ is inherently nondualisable by interpreting ${\bf T}$.
Let $\mathsf{a}:=(1,1,2)$, $\mathsf{b}=\mathsf{d}=(1,1,1)$, $\mathsf{c}=(2,1,1)$, so that $\mathsf{e}=(1,\gamma,1)$ and $\mathsf{f}=(1,1,1)$.

To apply Theorem \ref{thm:T} we need to show that the ghost element $g=\mathsf{f}_\infty^\mathsf{e}$ cannot be generated by line generators.  Let $h$ be an element of ${\bf R}^{\mathsf{P}_{\infty}}$ that can be obtained as a product of line generators and has $h(q)=\mathsf{f}$ for all points $q\in \mathsf{P}$.  We show that $h(\infty)=\mathsf{f}$ also, showing that $h\neq g$.  Theorem \ref{thm:T} then implies inherent nondualisability.  Let $\underline{\mathsf{f}}$ denote the element of $\{\mathsf{f}\}^{\mathsf{P}_\infty}$, which is the constant sequence equal to $\mathsf{f}$ on all coordinates.  We are going to show that $h=\underline{\mathsf{f}}$.

Let $\ell$ (built over $L$) be the first line generator in some product equalling $h$.  For a point $q\in L$, we have $h(q)=\mathsf{f}$, so that $\ell$ is the generator $\mathsf{d}_{\infty, L}^{\mathsf{b},\mathsf{a}}$ rather than $\mathsf{d}_{\infty,L}^{\mathsf{c},\mathsf{c}}$.  Similarly, if $\ell'$ (over the line $K$) denotes the final line generator involved in a product giving $h$, then $\ell'$ is $\mathsf{d}_{\infty,L}^{\mathsf{c},\mathsf{c}}$.

Next, observe that for any pair of lines $L,K$ we have $\mathsf{b}_{\infty,L}^{\mathsf{a},\mathsf{a}}\mathsf{b}_{\infty,K}^{\mathsf{a},\mathsf{a}}=\mathsf{b}_{\infty,K}^{\mathsf{a},\mathsf{a}}$, while $\mathsf{d}_{\infty,L}^{\mathsf{c},\mathsf{c}}\mathsf{d}_{\infty,K}^{\mathsf{c},\mathsf{c}}=\mathsf{d}_{\infty,L}^{\mathsf{c},\mathsf{c}}$.  Thus any product equalling $h$ can be assumed to alternate between line generators of the form $\mathsf{b}_{\infty,L}^{\mathsf{a},\mathsf{a}}$ and those of the form $\mathsf{d}_{\infty,L}^{\mathsf{c},\mathsf{c}}$ (where the line $L$ varies).  For lines $K,L$ let $w_{K,L}$ denote the product $\mathsf{b}_{\infty,K}^{\mathsf{a},\mathsf{a}}\mathsf{d}_{\infty,L}^{\mathsf{c},\mathsf{c}}=\mathsf{f}_{\infty,K\wedge L}^{\mathsf{e},\mathsf{e}}$.  If $K\neq L$ then $w_{K,L}=\mathsf{f}_{\infty,q}^{\mathsf{e},\mathsf{e}}$, where $q$ denotes $K\wedge L$.  If $K=L$, then $w_{K,L}=\mathsf{f}_{\infty,L}^{\mathsf{e},\mathsf{e}}$.  Let $w_L$ denote $w_{L,L}=\mathsf{f}_{\infty,L}^{\mathsf{e},\mathsf{e}}$ and $w_q=\mathsf{f}_{\infty,q}^{\mathsf{e},\mathsf{e}}$ (which is $w_{K,L}$ for any lines $K,L$ with $K\wedge L=q$).  

As each $w_{K,L}$ lies in the (abelian) group $\{(1,\gamma',1)\mid \gamma'\in {\bf C}\}^{\mathsf{P}_{\infty}}$, elements of the $w_{K,L}$ commute with each other.
Thus the observations so far imply that $h$ can be written as $(w_{q_1})^{n_1}\dots (w_{q_k})^{n_k}(w_{L_1})^{m_1}\dots(w_{L_{k'}})^{m_{k'}}$ where $k,k'$ are nonnegative integers, $n_1,\dots,n_k,m_1,\dots,m_{k'}$ are positive integers, $q_1,\dots,q_k$ are some pairwise distinct points, and $L_1,\dots,L_{k'}$ are some pairwise distinct lines.  For $i\leq k'$, let $p_i$ denote a point on $L_i$ but not equal to $q_j$ for any $j\leq k$, and not on any line $L_j$ for $j\leq k'$ and $i\neq j$.  We show that all $n_i$ and $m_i$ are multiples of $p$, showing that $h$ equals $\underline{\mathsf{f}}\neq g$, as required.  

Let $p$ denote the exponent of ${\bf C}$ (which is the order of $\gamma$, a generator for ${\bf C}$).   We begin by showing that each $m_i$ is a multiple of $p$.

Let $i\leq k'$.  The choice of $p_i$ guarantees that $h(p_i)=(w_{L_i}(p_i))^{m_i}=\mathsf{e}^{m_i}$.  But $h(p_i)=\mathsf{f}$, so that $m_i$ is a multiple of $p$ as claimed.  Thus $w_{L_i}^{m_i}=\mathsf{f}^{\mathsf{P}_{\infty}}$ and so can be ignored in the product representation of $h$.  As $i\leq k'$ was arbitrary, we can now assume without loss of generality that $k'=0$.  That is, $h=(w_{q_1})^{n_1}\dots (w_{q_k})^{n_k}$.

Now let $i\leq k$, and consider the point $q_i$.  The choice of $q_i$ guarantees that $h(q_i)=(w_{q_i}(q_i))^{n_i}=\mathsf{e}^{n_i}$.  But $h(q_i)=\mathsf{f}$ showing that $n_i$ is a multiple of $p$ as claimed.  This completes the proof that $h=\underline{\mathsf{f}}\neq g$.
 Hence, by Theorem \ref{thm:T} we have that ${\bf R}$ is inherently nondualisable.
\end{proof}

\begin{remark}\label{remcs}
Al Dhamri \cite{aldthesis} has shown that a completely simple semigroup is dualisable when it is isomorphic to the direct  product of a dualisable group with rectangular band.  Thus if it is true that every finite group whose Sylow subgroups are abelian is dualisable, then a finite completely simple semigroup is dualisable if and only if it is orthodox and has only abelian Sylow subgroups.
\end{remark}

By Theorem \ref{thm:c0s} and Theorem \ref{thm:nilvar}, a dualisable completely $0$-simple semigroup  $M^0[{\bf G},Q]$ must be such that $Q$ has all entries in $G$.  Then Theorem \ref{thm:cs} and the inherent nondualisability of finite groups with nonabelian Sylow subgroups \cite{quasza} (or see Theorem \ref{thm:group} above) give the following corollary.
\begin{cor}\label{cor:c0s}
If a completely 0-simple semigroup $M^0[{\bf G},Q]$ is dualisable, then ${\bf G}$ has all Sylow subgroups abelian, and $M^0[{\bf G},Q]$ is orthodox.  Equivalently, $M^0[{\bf G},Q]$ is isomorphic to the semigroup obtained by adjoining a zero element to the direct product of ${\bf G}$ \up(with all Sylow subgroups abelian\up) with a rectangular band of the same dimensions as $Q$.
\end{cor}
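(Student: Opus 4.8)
The plan is to combine the three inherent nondualisability results already established (Theorems \ref{thm:nilvar}, \ref{thm:cs} and \ref{thm:group}) by exploiting one elementary feature of the definition of inherent nondualisability: if a finite algebra ${\bf D}$ is dualisable then \emph{no} member of $\mathsf{ISP}({\bf D})$ can be inherently nondualisable, since each such member lies in the quasivariety of the finite dualisable algebra ${\bf D}$. Thus, assuming $M^0[{\bf G},Q]$ is dualisable, I would rule out each forbidden configuration by exhibiting it inside $\mathsf{ISP}(M^0[{\bf G},Q])$ and invoking the relevant theorem for a contradiction.

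First I would pin down the matrix entries. By Theorem \ref{thm:nilvar}, the variety of a dualisable semigroup contains no proper $3$-nilpotent semigroup (otherwise inherent nondualisability would follow, contradicting dualisability). Theorem \ref{thm:c0s} then forces $Q$ to have no $0$ entry, so every entry of $Q$ lies in $G$. Consequently the nonzero elements $\{(i,g,\lambda)\}$ are closed under multiplication and form a completely simple subsemigroup isomorphic to $M[{\bf G},Q]$; that is, $M^0[{\bf G},Q]$ is precisely $M[{\bf G},Q]$ with a zero adjoined.

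Next I would extract the group and orthodoxy conditions. Since $M[{\bf G},Q]$ is a subalgebra of $M^0[{\bf G},Q]$, it lies in the quasivariety of the dualisable algebra $M^0[{\bf G},Q]$ and so cannot be inherently nondualisable; Theorem \ref{thm:cs} therefore forces $M[{\bf G},Q]$ to be orthodox. Because adjoining a zero leaves the idempotent-product condition intact (any product involving $0$ is again $0$), $M^0[{\bf G},Q]$ is orthodox as well. Likewise, fixing $\lambda,i$ with $Q_{\lambda,i}\neq 0$, the maximal subgroup $\{(i,g,\lambda)\mid g\in G\}$ is a copy of ${\bf G}$ sitting as a subalgebra of $M^0[{\bf G},Q]$; were some Sylow subgroup of ${\bf G}$ nonabelian, Theorem \ref{thm:group} would make this copy inherently nondualisable, again contradicting dualisability. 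Hence every Sylow subgroup of ${\bf G}$ is abelian.

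For the ``equivalently'' reformulation I would invoke the structural fact recalled at the opening of this section: an orthodox completely simple semigroup is isomorphic to the direct product of its maximal subgroup with a rectangular band (of the dimensions of $Q$). Applying this to $M[{\bf G},Q]\cong{\bf G}\times B$ and re-adjoining the zero yields the stated description. The only delicate points are the quasivariety bookkeeping---verifying that each of $M[{\bf G},Q]$ and the copy of ${\bf G}$ genuinely lies in $\mathsf{ISP}(M^0[{\bf G},Q])$---and the transfer of orthodoxy between $M[{\bf G},Q]$ and its zero-adjunction; both are routine once the entries of $Q$ are known to lie in $G$, so the substance of the corollary is entirely carried by the three inherent nondualisability theorems.
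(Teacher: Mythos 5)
Your proposal is correct and follows essentially the same route as the paper: the paper also combines Theorem \ref{thm:c0s} with Theorem \ref{thm:nilvar} to force all entries of $Q$ into $G$, and then invokes Theorem \ref{thm:cs} and Theorem \ref{thm:group} to obtain orthodoxy and abelian Sylow subgroups. You merely make explicit the quasivariety bookkeeping (that subalgebras of a dualisable finite algebra cannot be inherently nondualisable) that the paper leaves implicit.
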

\begin{remark}
If every group with abelian Sylow subgroups is dualisable, then the result suggested in Remark \ref{remcs} implies that the converse to Corollary \ref{cor:c0s} also holds.  Indeed, if $p$ is the period of the group ${\bf G}$, and $M[{\bf G},Q]$ is orthodox, then $M[{\bf G},Q]\models (xy)^px\approx x$.  Thus the term $t(x,y):=(xy)^px$ is a projection for $M[{\bf G},Q]$, and the result of Davey and Knox \cite{davkno} implies that if $M[{\bf G},Q]$ is dualisable, then so is $M^0[{\bf G},Q]$.
\end{remark}

Let ${\bf L}^1$ denote the 3-element semigroup formed by adjoining an identity element to the two element left zero semigroup.  Let ${\bf R}^1$ denote the corresponding right zero semigroup with adjoined identity.  It is shown in \cite{jac03} that both ${\bf L}^1$ and ${\bf R}^1$ are inherently nondualisable.  In \cite[Proposition 3.5]{pet:13} Petrich shows that a completely regular semigroup  fails to be a normal band of groups if and only if it contains ${\bf L}^1$ or ${\bf R}^1$ as a subsemigroup.  Thus a dualisable completely regular semigroup must be a normal band of groups.  Using Theorem \ref{thm:regular}, we obtain the following corollary to Corollary \ref{cor:c0s}. 
\begin{cor}\label{cor:regular}
A dualisable regular semigroup ${\bf S}$ must be a normal band of groups, with each $\mathcal{J}$-class isomorphic to the direct product of a rectangular band with a group whose Sylow subgroups are abelian.
\end{cor}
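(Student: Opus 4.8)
The plan is to assemble the corollary from the earlier negative results, using throughout the elementary principle that \emph{a subalgebra of a finite dualisable algebra is never inherently nondualisable}: if ${\bf N}\leq{\bf S}$ with ${\bf S}$ finite and dualisable, then ${\bf N}$ lies in $\mathsf{ISP}({\bf S})$, the quasivariety of the finite dualisable algebra ${\bf S}$, and so by definition ${\bf N}$ is not inherently nondualisable. I would apply this repeatedly, to suitable subsemigroups, to the $\mathcal{J}$-classes, and to the structure groups of ${\bf S}$.

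First I would establish that ${\bf S}$ is completely regular. If the variety of ${\bf S}$ contained a proper $3$-nilpotent semigroup, then Theorem~\ref{thm:nilvar} would make ${\bf S}$ inherently nondualisable, contradicting dualisability; hence its variety contains no proper $3$-nilpotent semigroup, and the contrapositive of Theorem~\ref{thm:regular} forces the regular semigroup ${\bf S}$ to be completely regular. Next I would rule out the subsemigroups ${\bf L}^1$ and ${\bf R}^1$: both are inherently nondualisable by \cite{jac03}, so by the subalgebra principle neither embeds in ${\bf S}$. Petrich's Proposition~3.5 of \cite{pet:13} then asserts that a completely regular semigroup omitting both ${\bf L}^1$ and ${\bf R}^1$ as subsemigroups is a normal band of groups, giving the first assertion of the corollary.

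It remains to analyse the $\mathcal{J}$-classes. Because ${\bf S}$ is completely regular, each $\mathcal{J}$-class is a completely simple subsemigroup of ${\bf S}$; as a block of a normal band of groups it is orthodox, so by the structure theory recalled at the start of Section~7 it is a direct product of a rectangular band with its structure group ${\bf G}$. (Equivalently, a non-orthodox $\mathcal{J}$-class would itself be inherently nondualisable by Theorem~\ref{thm:cs}, again contradicting dualisability of ${\bf S}$.) Finally, each such ${\bf G}$ is a subgroup of ${\bf S}$, hence not inherently nondualisable by the subalgebra principle, and so Theorem~\ref{thm:group} forbids ${\bf G}$ from possessing a nonabelian Sylow subgroup. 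Thus every $\mathcal{J}$-class is a direct product of a rectangular band with a group whose Sylow subgroups are all abelian, as required.

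There is no single deep step: the argument is an assembly of Theorems~\ref{thm:nilvar}, \ref{thm:regular}, \ref{thm:cs} and \ref{thm:group} together with Petrich's characterisation. The point that most needs care is the repeated application of the subalgebra principle, which depends on the fact — valid precisely because ${\bf S}$ has already been shown to be completely regular — that the relevant groups and $\mathcal{J}$-classes genuinely are subsemigroups of ${\bf S}$ to which these inherent nondualisability theorems apply.
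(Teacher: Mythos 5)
Your proposal is correct and follows essentially the same route as the paper: Theorems~\ref{thm:nilvar} and \ref{thm:regular} to force complete regularity, exclusion of ${\bf L}^1$ and ${\bf R}^1$ (inherently nondualisable by \cite{jac03}) combined with Petrich's Proposition~3.5 to get a normal band of groups, and then Theorems~\ref{thm:cs} and \ref{thm:group} applied to the $\mathcal{J}$-classes and their structure groups --- the paper merely packages this last step as Corollary~\ref{cor:c0s}. Your explicit emphasis on the subalgebra principle (a subsemigroup of a finite dualisable semigroup cannot be inherently nondualisable) is exactly the mechanism the paper relies on implicitly.
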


Finally we mention that Theorem \ref{thm:cs} extends to variety membership for trivial reasons.
\begin{thm}\label{thm:csvar}
Let ${\bf S}$ be a finite semigroup whose variety contains a nonorthodox completely simple semigroup.  Then the quasivariety of ${\bf S}$ also contains a nonorthodox completely simple semigroup, hence ${\bf S}$ is inherently nondualisable by Theorem~\ref{thm:cs}.
\end{thm}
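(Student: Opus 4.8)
The plan is to mirror the strategy of Theorem~\ref{thm:groupvar}, replacing the rôle played there by subgroups with that of \emph{minimum ideals}: a nonorthodox completely simple semigroup living in the variety $\mathsf{HSP}(\mathbf{S})$ will be captured inside the quasivariety $\mathsf{ISP}(\mathbf{S})$ as the minimum ideal (kernel) of a suitable finite subalgebra of a power of $\mathbf{S}$.

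First I would reduce to a \emph{finite} and \emph{simple} witness. The hypothesis supplies some nonorthodox completely simple $\mathbf{T}\in\mathsf{HSP}(\mathbf{S})$, possibly infinite. Selecting two idempotents $e,f\in\mathbf{T}$ whose product $ef$ is not idempotent and running the construction at the start of the proof of Theorem~\ref{thm:cs}, one obtains inside $\mathbf{T}$ a completely simple subsemigroup $\mathbf{R}=M[\mathbf{C},Q]$ with $\mathbf{C}$ cyclic. Since every subgroup of a member of $\mathsf{HSP}(\mathbf{S})$ has exponent dividing the period of $\mathbf{S}$, the group $\mathbf{C}$ is finite, so $\mathbf{R}$ is a finite nonorthodox completely simple -- and therefore \emph{simple} -- semigroup lying in $\mathsf{HSP}(\mathbf{S})$.

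Next, because $\mathbf{S}$ is finite and $\mathbf{R}$ is a finite member of the locally finite variety $\mathsf{HSP}(\mathbf{S})$, the standard relatively-free-algebra argument yields a finite $n$, a finite subalgebra $\mathbf{M}'\le\mathbf{S}^n$, and a surjective homomorphism $\nu\colon\mathbf{M}'\to\mathbf{R}$. Let $I$ be the minimum ideal of $\mathbf{M}'$. By the remark following Theorem~\ref{thm:Rees}, $I$ is completely simple, and as a subsemigroup of a power of $\mathbf{S}$ it lies in the quasivariety $\mathsf{ISP}(\mathbf{S})$. The image $\nu(I)$ is an ideal of $\mathbf{R}$, and here the simplicity of $\mathbf{R}$ does the crucial work: it forces $\nu(I)=\mathbf{R}$, so $\nu$ restricts to a surjection $I\to\mathbf{R}$ of completely simple semigroups. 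I would then transfer nonorthodoxy back across this surjection. Idempotents lift along surjective homomorphisms of finite semigroups (replace any preimage $x$ of an idempotent by $x^\omega\in I$), so lifting a pair of idempotents of $\mathbf{R}$ with non-idempotent product to idempotents of $I$ shows, since homomorphisms carry idempotents to idempotents, that $I$ is itself nonorthodox. Thus $I$ is a nonorthodox completely simple semigroup in $\mathsf{ISP}(\mathbf{S})$. Applying Theorem~\ref{thm:cs} to $I$ gives that $I$ is inherently nondualisable, and since $I\in\mathsf{ISP}(\mathbf{S})$ this passes to $\mathbf{S}$: were $\mathbf{S}\in\mathsf{ISP}(\mathbf{N})$ for a finite dualisable $\mathbf{N}$, we would have $I\in\mathsf{ISP}(\mathbf{S})\subseteq\mathsf{ISP}(\mathbf{N})$, contradicting the inherent nondualisability of $I$.

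The main obstacle is exactly what dictates the ``minimum ideal'' choice: a priori the nonorthodox pair of idempotents could sit in a $\mathcal{J}$-class of $\mathbf{M}'$ strictly above the kernel, in which case the kernel could well be orthodox and hence useless. This is precisely why it is essential to first replace $\mathbf{T}$ by a \emph{simple} witness $\mathbf{R}$: simplicity guarantees $\nu(I)=\mathbf{R}$, so \emph{all} of the nonorthodoxy of $\mathbf{R}$ is reflected in the kernel $I$. The remaining ingredients -- local finiteness producing the finite witness and the finite power $n$, together with idempotent-lifting along finite surjections -- are routine, which is what the word ``trivial'' in the statement refers to.
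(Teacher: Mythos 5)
Your proposal is correct and takes essentially the same route as the paper: pass to a finite nonorthodox completely simple witness in the variety, take a finite preimage $\mathbf{B}\in\mathsf{ISP}(\mathbf{S})$ with a surjection $\nu$ onto it, and use simplicity of the image to show the minimum ideal $I$ of $\mathbf{B}$ (completely simple by the remark after Theorem~\ref{thm:Rees}) maps onto the witness, so that nonorthodoxy pulls back to $I\in\mathsf{ISP}(\mathbf{S})$. The only cosmetic differences are that the paper transfers nonorthodoxy via failure of the identity $(xy)^p\approx x^py^p$ rather than by lifting idempotents, and it assumes the witness is finite without loss of generality instead of running your explicit reduction to $M[\mathbf{C},Q]$.
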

\begin{proof}
Let ${\bf A}$ be the nonorthodox completely simple semigroup in the variety of ${\bf S}$.  Without loss of generality, we may assume ${\bf A}$ is finite and we may select a finite semigroup ${\bf B}$ in the quasivariety of ${\bf S}$ such that there is a surjective homomorphism from ${\bf B}$ onto ${\bf A}$.  Now let $I$ be the minimal ideal of ${\bf B}$.  We claim that $I$ is a nonorthodox completely simple semigroup.  

To see why $I$ has this structure, select any $s\in A$ that is the image under $\nu$ of some element $s'\in I$.  Now, $s$ divides every element $t\in A$, but in ${\bf B}$, the element $s'$ only divides elements of $I$ (as $I$ is the minimum ideal).  Thus every element of $t\in A$ is the image under $\nu$ of some element of $I$.  Therefore ${\bf A}$ is  quotient of $I$, which is a completely simple semigroup (by the remark immediately following Theorem \ref{thm:Rees}).  As ${\bf A}$ fails the identity $(xy)^p\approx x^py^p$ for $p$ the period of ${\bf S}$, so also does $I$.  Hence $I$ is nonorthodox.
\end{proof}

\section{Other small inherently nondualisable semigroups}
In \cite{jac03} the author showed that the following three $3$-element semigroups are inherently nondualisable.
\[
\parbox{3cm}{\begin{tabular}{c|ccc}
$\cdot$&$1$&$a$&$0$\\
\hline
$1$&$1$&$a$&$0$\\
$a$&$a$&$0$&$0$\\
$0$&$0$&$0$&$0$\\
\end{tabular}\\
\begin{center}${\bf C}_{2,1}^1$\end{center}
}\qquad
\parbox{3cm}{\begin{tabular}{c|ccc}
$\cdot$&$1$&$a$&$b$\\
\hline
$1$&$1$&$a$&$b$\\
$a$&$a$&$a$&$a$\\
$b$&$b$&$b$&$b$\\
\end{tabular}
\begin{center}${\bf L}^1$\rule{0cm}{.57cm}\end{center}
}\qquad
\parbox{3cm}{\begin{tabular}{c|ccc}
$\cdot$&$1$&$a$&$b$\\
\hline
$1$&$1$&$a$&$b$\\
$a$&$a$&$a$&$b$\\
$b$&$b$&$a$&$b$\\
\end{tabular}
\begin{center}${\bf R}^1$\rule{0cm}{.57cm}\end{center}
}
\]
The following theorem parallels Theorems~\ref{thm:nilvar}, \ref{thm:groupvar} and \ref{thm:csvar}.
\begin{thm}\label{thm:small}
\begin{enumerate}
\item If ${\bf S}$ is a finite semigroup whose variety contains ${\bf C}_{2,1}^1$, then the variety of ${\bf S}$ contains a proper $3$-nilpotent semigroup, hence ${\bf S}$  is inherently nondualisable.
\item If ${\bf S}$ is a finite semigroup whose variety contains ${\bf L}^1$ or ${\bf R}^1$, then the quasivariety of ${\bf S}$ contains ${\bf L}^1$ or ${\bf R}^1$ and so is inherently nondualisable.
\end{enumerate}
\end{thm}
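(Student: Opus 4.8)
The plan is to handle the two parts by the now-standard device of descending from a member of the \emph{variety} of ${\bf S}$ to a concrete witness inside the \emph{quasivariety}, exactly as in the proofs of Theorems \ref{thm:nilvar}, \ref{thm:groupvar} and \ref{thm:csvar}. For part (1), I would first show that $\mathbf{C}_{2,1}^1$ \emph{itself} generates a variety containing a proper $3$-nilpotent semigroup; then $\mathsf{HSP}(\mathbf{C}_{2,1}^1)\subseteq\mathsf{HSP}({\bf S})$ places such a semigroup in the variety of ${\bf S}$ and Theorem \ref{thm:nilvar} finishes the argument. To produce the proper $3$-nilpotent semigroup I would work in the square $\mathbf{C}_{2,1}^1\times\mathbf{C}_{2,1}^1$ with $u:=(a,1)$ and $v:=(1,a)$. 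A direct check gives $u^2=(0,1)$, $v^2=(1,0)$ and $uv=vu=(a,a)$, so the subsemigroup $T:=\langle u,v\rangle$ equals $\{u,v,uv\}\cup J$, where $J$ is the ideal of those tuples having a $0$-coordinate. In $T/J$ every product of length three falls into $J$ (since $uv$ annihilates everything and $u^2=v^2=0$) while $\overline{uv}\neq\bar 0$, so $T/J$ is proper $3$-nilpotent and lies in $\mathsf{HSP}(\mathbf{C}_{2,1}^1)$. This part is routine; the only point is to choose $u,v$ so as to separate the two ``nilpotent directions''.

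For part (2) I would argue for ${\bf L}^1$, the case of ${\bf R}^1$ being exactly dual (interchange $\mathcal{L}$ and $\mathcal{R}$). Since ${\bf L}^1$ is finite and lies in $\mathsf{HSP}({\bf S})$, there is a finite ${\bf B}\in\mathsf{ISP}({\bf S})$ and a surjection $\nu\colon{\bf B}\to{\bf L}^1$. As ${\bf L}^1$ is a monoid, I would first replace ${\bf B}$ by $g{\bf B}g$, where $g$ is an idempotent with $\nu(g)=1$; this is a submonoid with identity $g$ (hence still in the quasivariety) and still maps onto $1\cdot{\bf L}^1\cdot 1={\bf L}^1$. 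Now select idempotents $e,f\in{\bf B}$ with $\nu(e)=a$ and $\nu(f)=b$, put $T:=\langle e,f\rangle$, and let $I$ be the minimal ideal of $T$. Then $I$ is completely simple, and $\nu(I)$ is a nonempty ideal of $\nu(T)=\langle a,b\rangle$, which is the two-element left-zero semigroup $L_2$; as $L_2$ is simple we get $\nu(I)=L_2$.

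The heart of the argument is to find, inside the completely simple semigroup $I$, two idempotents lying in a common $\mathcal{L}$-class and mapping to $a$ and $b$. I would pick an idempotent $x\in I$ with $\nu(x)=a$, let $L$ be its $\mathcal{L}$-class, and pick any $z\in I$ with $\nu(z)=b$. In a completely simple semigroup $L$ meets the $\mathcal{R}$-class of $z$ in a group $\mathcal{H}$-class, whose idempotent $y$ then satisfies $y\mathrel{\mathcal{L}}x$ and $y\mathrel{\mathcal{R}}z$; hence $\nu(y)\mathrel{\mathcal{R}}b$, so $\nu(y)=b$ because the $\mathcal{R}$-class of $b$ in $L_2$ is $\{b\}$. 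Two idempotents in a common $\mathcal{L}$-class always satisfy $xy=x$ and $yx=y$, so $\{x,y\}$ is a left-zero semigroup; adjoining the monoid identity $g$ (which fixes $x$ and $y$) gives a subsemigroup $\{g,x,y\}\cong{\bf L}^1$ of ${\bf B}$. Thus ${\bf L}^1\in\mathsf{ISP}({\bf S})$, and since ${\bf L}^1$ is inherently nondualisable \cite{jac03}, so is ${\bf S}$.

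I expect the main obstacle to be correctly assembling ${\bf L}^1$: the non-commuting idempotent pair $x,y$ and the identity $g$ genuinely come from different places. One cannot simply use the minimal ideal of ${\bf B}$, since its image may be all of $L_2$, which is a \emph{normal} rectangular band and need not contain ${\bf L}^1$; so Petrich's characterisation cannot be invoked on it directly. The correct move is to harvest the left-zero pair from the completely simple minimal ideal of the idempotent-generated subsemigroup $\langle e,f\rangle$, and to supply the missing identity from the ambient monoid secured by the reduction to $g{\bf B}g$. Once $x$, $y$ and $g$ are in hand, checking that $\{g,x,y\}$ is closed and isomorphic to ${\bf L}^1$ is immediate.
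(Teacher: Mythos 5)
Your proposal is correct, and both parts follow the same overall strategy as the paper's proof, with part (2) executed by a slightly different mechanism. Part (1) is essentially the paper's argument verbatim: the paper also works inside ${\bf C}_{2,1}^1\times{\bf C}_{2,1}^1$ with the subsemigroup generated by $(1,a)$ and $(a,1)$ and factors out the elements having a zero coordinate; your formulation of the ideal as \emph{all} tuples of $\langle u,v\rangle$ with a zero coordinate is in fact slightly cleaner, since the six-element set displayed in the paper omits $(a,0)$ and $(0,a)$ and so is not literally closed under multiplication. For part (2) the skeleton is identical --- pass to a finite ${\bf B}\in\mathsf{ISP}({\bf S})$ mapping onto ${\bf L}^1$, find a left-zero pair of idempotents lying over $a$ and $b$ inside a completely simple minimal ideal, and adjoin an idempotent lying over $1$ --- but the implementations differ in two details. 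The paper manufactures the pair by explicit computation: it chooses idempotents $\mathsf{a}$ and $\mathsf{b}:=(\mathsf{b}'\mathsf{a})^p$ in the minimum ideal of the \emph{whole} preimage, normalised so that conjugation by the idempotent $\mathsf{e}\in\nu^{-1}(1)$ fixes them, whence $\mathsf{b}\mathsf{a}=\mathsf{b}$, and complete simplicity then gives $\mathsf{a}\mathsf{b}=\mathsf{a}$. You instead invoke the eggbox structure (every $\mathcal{L}$-class meets every $\mathcal{R}$-class in a group $\mathcal{H}$-class) of the minimal ideal of $\langle e,f\rangle$ to locate two $\mathcal{L}$-equivalent idempotents over $a$ and $b$, and you secure the identity by localising to the monoid $g{\bf B}g$ rather than by the paper's conditions $\mathsf{e}x\mathsf{e}=x$. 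Both mechanisms rest on the same structural fact, namely that $\mathcal{L}$-equivalent idempotents form a left-zero pair, so the two proofs are interchangeable; yours trades the paper's short computations for standard Green's-relations facts, and the $g{\bf B}g$ reduction is a tidy way to avoid tracking compatibility with the identity. One small remark: your closing caution that ``one cannot simply use the minimal ideal of ${\bf B}$'' is overstated --- the paper does harvest its pair from the minimum ideal of the full preimage (which is fine, because the image of that ideal is an ideal of ${\bf L}^1$ and hence contains $\{a,b\}$); what is true, and what you correctly arrange, is that the identity element can never come from the minimal ideal itself and must be adjoined from above.
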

\begin{proof}
(1)
Assume that ${\bf C}_{2,1}^1$ is in $\mathsf{HSP}({\bf S})$.  In the subalgebra of ${\bf C}_{2,1}^1\times {\bf C}_{2,1}^1$ on $\{(1,a),(a,1), (a,a),(1,0),(0,1),(0,0)\}$, the set $\{(1,0),(0,1),(0,0)\}$ is an ideal $I$ and the corresponding quotient is a proper $3$-nilpotent semigroup.  So the claim follows from Theorem \ref{thm:nilvar}.  (In only a few more lines this can also be proved using Proposition 17 of \cite{jac03}.)

(2) Next assume that ${\bf L}^1$ is in the variety of ${\bf S}$.  So there is a finite ${\bf M}\in\mathsf{SP}({\bf S})$ and  a surjective homomorphism $\nu:{\bf M}\to {\bf L}^1$.  Let $p$ be the period of ${\bf M}$.  Again, we may let $\mathsf{e}$ be an idempotent element of $\nu^{-1}(1)$ and consider an element $\mathsf{a}$ with $\mathsf{a}=\mathsf{e}\mathsf{a}\mathsf{e}$ from intersection of the minimum ideal of ${\bf M}$ with $\nu^{-1}(a)$.  By replacing $\mathsf{a}$ with $\mathsf{a}^p$ for some power $p$ if necessary, we may assume that $\mathsf{a}$ is idempotent.  Next select any $\mathsf{b}'$ from the intersection of $\nu^{-1}(b)$ with the minimum ideal of ${\bf M}$ and such that $\mathsf{e}\mathsf{b}'\mathsf{e}=\mathsf{b}'$.
Finally, select $\mathsf{b}:=(\mathsf{b}'\mathsf{a})^p$, which is idempotent and has $\mathsf{b}\mathsf{a}=\mathsf{b}$.  As this minimal ideal is a completely simple semigroup (see Section \ref{sec:sgp}: the minimal ideal is isomorphic to a Rees matrix semigroup) it follows that $\mathsf{a}\mathsf{b}=\mathsf{a}$ as well.  Then the subsemigroup on $\{\mathsf{e},\mathsf{a},\mathsf{b}\}$ is isomorphic to ${\bf L}^1$.  The case of ${\bf R}^1$ is by symmetry.
\end{proof}

We mention that ${\bf L}^1$ and ${\bf R}^1$ are orthodox completely regular semigroups.  These provide a further restriction on potential dualisability that can be added to Corollary~\ref{cor:regular}.

\section{Instability of dualisability under direct products}\label{sec:PQ}
We  now give an example of two dualisable semigroups whose direct product is inherently nondualisable.
First consider the semigroup ${\bf M}$ on $\{a,b,e,f,0\}$ whose nonzero products are $ea=a$, $bf=b$, $ee=e$ and $ff=f$.  The subsemigroup on $\{a,e,0\}$ is often denoted by ${\bf P}$, while the subsemigroup on $\{b,f,0\}$ is often denoted by ${\bf Q}$. These semigroups arise mysteriously in a number of algorithmic issues for varieties of semigroups (see Kharlampovich and Sapir \cite{khasap}), but also in other issues that might possibly have a relationship to dualisability, such as in \cite{golsap}.  The semigroup ${\bf M}$ generates the same quasivariety as the direct product ${\bf P}\times {\bf Q}$ as it embeds both ${\bf P}$ and ${\bf Q}$ and is very easily seen to be isomorphic to the subsemigroup of ${\bf P}\times {\bf Q}$ on $(a,0),(e,0),(0,b),(0,f),(0,0)$.   
\begin{thm}\label{thm:PQ}
The semigroup ${\bf P}\times {\bf Q}$ generates a variety containing a proper $3$-nilpotent semigroup.  Hence it is inherently nondualisable.
\end{thm}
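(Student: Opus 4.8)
The plan is to reduce the whole statement to Theorem~\ref{thm:nilvar}: once a \emph{proper} $3$-nilpotent semigroup is exhibited inside the variety $\mathsf{HSP}({\bf P}\times{\bf Q})$, the inherent nondualisability follows at once. So the entire content is the construction of such a semigroup, and I would build it directly inside ${\bf P}\times{\bf Q}$ rather than passing to a power. The key structural feature to exploit is the one-sided collapse in each factor: in ${\bf P}$ we have $ee=e$ and $ea=a$, while every product in which $a$ occurs as a non-final factor is $0$ (since $ax=0$ for all $x$); dually, in ${\bf Q}$ we have $ff=f$ and $bf=b$, while every product in which $b$ occurs as a non-initial factor is $0$ (since $xb=0$ for all $x$).

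First I would set $\mathsf{x}:=(e,b)$ and $\mathsf{y}:=(a,f)$ in ${\bf P}\times{\bf Q}$ and let ${\bf S}$ be the subsemigroup they generate. A direct check gives $\mathsf{x}\mathsf{y}=(ea,bf)=(a,b)$, whereas $\mathsf{x}\mathsf{x}=(e,0)$, $\mathsf{y}\mathsf{x}=(0,0)$ and $\mathsf{y}\mathsf{y}=(0,f)$. I would then take $I:=\bigcup_{k\geq 3}{\bf S}^{k}$, the set of all values of products of at least three generators. This is an ideal of ${\bf S}$ (as ${\bf S}\cdot{\bf S}^{k}={\bf S}^{k+1}\subseteq I$ for $k\geq 3$), and by construction every product of three elements of ${\bf S}$ lies in $I$, so the Rees quotient ${\bf S}/I$ is automatically $3$-nilpotent. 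What remains is properness, namely that some binary product escapes $I$; the natural candidate is $\mathsf{x}\mathsf{y}=(a,b)$.

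The hard part---really the only substantive step---will be to verify that $(a,b)\notin I$, i.e.\ that $(a,b)$ cannot be realised as a product of three or more of the generators $\mathsf{x},\mathsf{y}$. I would argue coordinatewise, using the one-sided collapses above. Since multiplication in ${\bf P}\times{\bf Q}$ is coordinatewise and $\mathsf{x},\mathsf{y}$ have first coordinates $e,a$, a word $w=g_{1}\cdots g_{m}$ has first coordinate $a$ precisely when $g_{m}=\mathsf{y}$ and $g_{1}=\cdots=g_{m-1}=\mathsf{x}$ (an earlier occurrence of $a$ forces a $0$), so $w=\mathsf{x}^{m-1}\mathsf{y}$. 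Symmetrically, reading second coordinates ($b,f$ for $\mathsf{x},\mathsf{y}$), the word has second coordinate $b$ precisely when $g_{1}=\mathsf{x}$ and $g_{2}=\cdots=g_{m}=\mathsf{y}$, so $w=\mathsf{x}\mathsf{y}^{m-1}$. For the value to be $(a,b)$ both must hold, and $\mathsf{x}^{m-1}\mathsf{y}=\mathsf{x}\mathsf{y}^{m-1}$ only when $m=2$. Hence no product of length $\geq 3$ equals $(a,b)$, giving $(a,b)\notin I$.

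Consequently ${\bf S}/I$ is a proper $3$-nilpotent semigroup---concretely the four-element semigroup $\{[\mathsf{x}],[\mathsf{y}],[\mathsf{x}\mathsf{y}],0\}$ in which $[\mathsf{x}][\mathsf{y}]\neq 0$ and every other binary product vanishes. As ${\bf S}/I\in\mathsf{HS}({\bf P}\times{\bf Q})\subseteq\mathsf{HSP}({\bf P}\times{\bf Q})$, the variety of ${\bf P}\times{\bf Q}$ contains a proper $3$-nilpotent semigroup, and Theorem~\ref{thm:nilvar} then yields that ${\bf P}\times{\bf Q}$ is inherently nondualisable, completing the proof.
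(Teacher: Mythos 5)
Your proposal is correct and takes essentially the same route as the paper: the paper generates a subsemigroup using the very same pairs $(e,b)$ and $(a,f)$ (it works inside ${\bf M}^2$, where ${\bf M}$ is the five-element semigroup noted to lie in the quasivariety of ${\bf P}\times{\bf Q}$, but the generated subsemigroup is isomorphic to yours), factors by the same ideal---there described as the elements with a zero coordinate, which coincides with your set of products of length at least three---and then invokes Theorem~\ref{thm:nilvar}. Your coordinatewise argument that $(a,b)$ cannot be written as a product of three or more generators is precisely the step the paper leaves to the reader as ``very easily verified.''
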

\begin{proof}
The second claim will follow from the first and Theorem \ref{thm:nilvar}.  Observe that in the square ${\bf M}^2$, the elements $(a,f), (e,b)$ generate a subsemigroup ${\bf S}$ in which the only elements failing to have a zero coordinate are $(a,f), (e,b), (a,b)$.  Thus the set $I:=S\backslash\{(a,f), (e,b), (a,b)\}$ is an ideal and the Rees quotient ${\bf S}/I$ is very easily verified to be a proper 3-nilpotent semigroup.  Hence ${\bf M}$ is inherently nondualisable by Theorem \ref{thm:nilvar}.
\end{proof}
\begin{theorem}
${\bf P}$ and ${\bf Q}$ are dualisable.
\end{theorem}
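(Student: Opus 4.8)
The plan is to prove that ${\bf P}$ is dualisable and then obtain the same for ${\bf Q}$ by the left--right dual of the argument. Indeed ${\bf Q}$ is the opposite semigroup ${\bf P}^{\mathrm{op}}$ (relabel $e\leftrightarrow f$, $a\leftrightarrow b$), and reversing all products is a concrete category isomorphism $\mathsf{ISP}({\bf P})\to\mathsf{ISP}({\bf Q})$ that carries any dualising alter ego for ${\bf P}$ to one for ${\bf Q}$; alternatively one repeats the argument below with the roles of left and right multiplication interchanged. For ${\bf P}$ itself I would appeal to the IC Duality Theorem \ref{IC}: it suffices to exhibit an alter ego $\mathbb{P}$ for which the interpolation condition holds.

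The computational backbone is an explicit description of the term operations of ${\bf P}$. Writing the elements as $e,a,0$, multiplication takes the ``left-masking'' form $(xy)_i=y_i$ when $x_i=e$ and $(xy)_i=0$ otherwise; reading this coordinatewise in a power shows that every semigroup word $w$ in variables $z_1,\dots,z_n$ computes a \emph{gated projection} $t_{S,\ell}$, where $\ell$ is the last variable of $w$, the set $S$ consists of the variables preceding it, and $t_{S,\ell}(\bar x)_i=x^{(\ell)}_i$ if $x^{(s)}_i=e$ for all $s\in S$ while $t_{S,\ell}(\bar x)_i=0$ otherwise. A second, equally important observation is that ${\bf P}$ is medial, $(xy)(zu)\approx(xz)(yu)$ (both sides equal $u$ exactly when $x=y=z=e$, by the masking form). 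Hence the multiplication $m$ is a homomorphism ${\bf P}^2\to{\bf P}$ and may be used as a binary operation of the alter ego.

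Accordingly I would take $\mathbb{P}$ to carry the two constants $e,0$ (the one-element subalgebras of ${\bf P}$), the operation $m$, and the finitely many compatible relations of arity at most three (the subalgebras of ${\bf P}^2$ and ${\bf P}^3$). Because $m$ and the constants are present, any morphism $\alpha\colon\mathbb{X}\to\mathbb{P}$ from a substructure $\mathbb{X}\le\mathbb{P}^n$ is forced to be a semigroup homomorphism of the underlying ${\bf X}\le{\bf P}^n$ with $\alpha(e^n)=e$ and $\alpha(0^n)=0$; in particular the ``bad'' constant homomorphisms onto $e$ or $0$ are excluded. It then remains to show that every such $\alpha$ is the restriction to $X$ of a gated projection $t_{S,\ell}$. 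Here I would argue in two stages: first restrict $\alpha$ to the idempotents of ${\bf X}$ (the tuples with entries in $\{e,0\}$), which form a subsemilattice of $\langle\{e,0\};\wedge\rangle^n$, and use the chosen relations to show that the induced meet-semilattice homomorphism into $\langle\{e,0\};\wedge\rangle$ is governed by a single gate set $S\subseteq\{1,\dots,n\}$; then analyse $\alpha$ on the non-idempotent elements to locate the output coordinate $\ell$, exploiting that left multiplication reveals the entry of a tuple at a coordinate exactly where the left factor equals $e$.

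The step I expect to be the main obstacle is precisely this verification of the interpolation condition: characterising the homomorphisms from an arbitrary ${\bf X}\le{\bf P}^n$ into ${\bf P}$ and showing each is a gated projection, which amounts to checking that the modestly sized relational part of $\mathbb{P}$ already entails enough to pin down both the gate set $S$ and the output coordinate $\ell$, including the degenerate cases where $X$ contains few elements of full $e$-support. Should the arity-$\le 3$ relations prove insufficient, the fallback is to let $\mathbb{P}$ carry all finitary compatible relations and verify interpolation directly from the gated-projection description; the medial identity and the smallness of ${\bf P}$ keep this finite analysis tractable.
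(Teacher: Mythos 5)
Your scaffolding matches the paper's: you invoke the IC Duality Theorem \ref{IC}, you dispose of ${\bf Q}$ via the anti-isomorphism with ${\bf P}$, you include the (entropic/medial) multiplication and the two constants $e,0$ in the alter ego, and your description of the term functions of ${\bf P}$ as ``gated projections'' is correct and is exactly what the paper's terms $t_i$ realise. The genuine gap is at the point you yourself flag as ``the main obstacle'': the verification of the interpolation condition is never carried out, and that verification is the entire mathematical content of the theorem, not a routine check. With only multiplication and the constants, non-term morphisms really do exist. For example, take $n=2$, let $X=\{(e,e),(a,0),(0,a),(0,0)\}$, which is a subsemigroup of ${\bf P}^2$ containing both constants, and let $\alpha$ send $(e,e)\mapsto e$, $(0,0)\mapsto 0$ and both $(a,0)$ and $(0,a)$ to $a$. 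This $\alpha$ is a semigroup homomorphism preserving the constants, but it agrees with no gated projection: a term giving $a$ on $(a,0)$ must be the bare projection $x_1$, which gives $0$ on $(0,a)$. So some further structure must be added to the alter ego, and the real work is to identify structure that both eliminates such maps and supports a proof that every surviving morphism is a term function.

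The paper supplies precisely this: a flat semilattice operation $\wedge$ (with $0$ at the bottom and $a,e$ incomparable) and a partial join $\vee$ with domain $\{0,a\}^2\cup\{(e,e)\}$, included as \emph{operations}, so that every substructure $\mathbb{X}\leq\mathbb{P}^n$ is closed under them. The argument then takes the $\wedge$-least preimages $e^\wedge$ and $a^\wedge$ of $e$ and $a$, reads candidate gate and output coordinates off $a^\wedge$, and, assuming every candidate term fails, uses $\vee$ to join the hypothetical witnesses $z^{(i)}$ back up to $a^\wedge$, yielding the contradiction $0=\alpha(a^\wedge)=a$. Your fallback---all compatible relations of arity at most $3$, or all finitary relations---does not substitute for this. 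Adding relations rather than operations leaves substructures free to omit meets and joins, so the minimal-element argument is unavailable (the bad $\alpha$ above is, incidentally, killed by the graph of $\wedge$ viewed as a ternary relation, but no argument is offered that what survives all your relations is always a term function). Moreover, the ``brute force'' alter ego with all finitary compatible relations is only known to yield a duality when the algebra is dualisable, which is exactly what is to be proved, so appealing to it is circular without the verification you defer. In short: correct framework and the same intended route as the paper, but the decisive construction---the choice of $\wedge$ and $\vee$ and the contradiction argument they enable---is missing.
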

\begin{proof}  We apply the IC Duality Theorem \ref{IC} to ${\bf P}$, with the result for ${\bf Q}$ following by symmetry (as ${\bf Q}$ is anti-isomorphic to ${\bf P}$).  We construct an alter ego $\mathbb{P}$ on the set $\{0,a,e\}$.  First observe that ${\bf P}$ is entropic: multiplication itself is a homomorphism from ${\bf P}^2$ to ${\bf P}$, and we include it in the signature of $\mathbb{P}$; we write it as concatenation.  A second homomorphism from ${\bf P}^2$ to ${\bf P}$ we denote by $\wedge$ and is a flat semilattice operation, with $0$ as the bottom element, and $a$, $e$ incomparable elements at height $1$.  We also include the two constants $0$ and $e$ in the signature as well as the idempotent partial operation $\vee$ with domain $\{0,a\}^2\cup\{(e,e)\}$ with $0\vee a=a\vee 0=a$.

We now consider any $\mathbb{X}\leq \mathbb{P}^n$ and any $\alpha:\mathbb{X}\to \mathbb{P}$.  To apply the IC Duality Theorem \ref{IC} we must show that $\alpha$ coincides with a term function of ${\bf P}$.  First note that $\{0,e\}$ is a subuniverse of $\mathbb{P}$ term equivalent to the two element semilattice with constants.  This is well known to form a dualising alter ego for the semilattice $\langle \{0,e\};\cdot\rangle$.  Thus if $\alpha(X)=\{0,e\}$, then we may use semilattice duality to find a term $t$ with $t(x)=\alpha(x)$ for any $x\in X\cap \{0,e\}^n$.  But in this situation, $\alpha(x)=\alpha(x)\alpha(x)=\alpha(x^2)$ and $x^2\in\{0,e\}^n$ for all $x\in X$.  Thus $t(x^2)=\alpha(x)$ for all $x\in X$, as required.  So now we assume that $\alpha(X)=\{0,e,a\}$.

As $\mathbb{X}$ is a finite semilattice with respect to the operation $\wedge$, we may select a smallest element $e^\wedge$ of $X$ that $\alpha$ maps to $e$ and an element $a^\wedge$ that is smallest with respect to mapping to $a$.  Let $I:=\{i\leq n\mid a^\wedge (i)=a\}$, which is non-empty, as elements in $X\cap \{0,e\}^n$ are mapped by $\alpha$ into $\{0,e\}$ because they are beneath the constant $\underline{e}$ with respect to the order induced by $\wedge$.  Let $J=\{j\leq n\mid a^\wedge(j)=e\}$, which may possibly be empty.  Let $j_1,\dots,j_k$ be an enumeration of the elements in $J$.   For each $i\in I$ let $t_i(x)$ be the term $x_ix_{j_1}^2\dots x_{j_k}^2$ (which is simply $x_i$ if $J$ is empty).  We claim that there is $i\in I$ such that $\alpha(x)=t_i(x)$.  First observe that $\alpha(a^\wedge e^\wedge)=ae=a$, so that $e^\wedge(j)=e$ for $j\in I\cup J$.  Thus for any $x\in \alpha^{-1}(\{a,e\})$ we have $\alpha(x)=t_i(x)$, for any $i\in I$.

Now let us assume for contradiction that for every $i\in I$ there is a $z^{(i)}$ such that $\alpha(z^{(i)})=0$ but $t_i(z^{(i)})\neq 0$.  Note that the only nonzero products in ${\bf P}$ are of the form $ae\dots e$ or $e\dots e$.  Thus if $t_i(z^{(i)})\neq 0$ it follows that $a^\wedge(j)=e$ implies $z^{(i)}(j)=e$.  If $t_i(z^{(i)})=e$ we may replace $z^{(i)}$ by $a^\wedge z^{(i)}$ (we will keep the notation $z^{(i)}$ for this possibly new choice).  Thus we may assume that $t_i(z^{(i)})=a$.  Then $t_i(a^\wedge\wedge z^{(i)})=a\wedge a=a$, so that we may replace $z^{(i)}$ by $a^\wedge\wedge z^{(i)}$ (again, keeping the notation $z^{(i)}$ for this possibly new choice).  Summarising the properties of the elements $z^{(i)}$ (for $i\in I$) we have 
\[
z^{(i)}(j)\in\begin{cases}
\{a\}&\text{ if }i=j\\
\{0,a\}&\text{ if }j\in I\\
\{e\}&\text{ if }j\in J\\
\{0\}&\text{ otherwise}
\end{cases}
\]
Such elements are in the domain of $\vee$ on $\mathbb{X}$, giving $\bigvee_{i\in I}z^{(i)}=a^\wedge$ yielding the contradiction $0=0\vee 0\vee \dots \vee 0=\alpha(\bigvee_{i\in I}z^{(i)})= \alpha(a^\wedge)=a$.  Hence we conclude that there is $i\in I$ such that $t_i(x)=\alpha(x)$ for all $x\in X$, showing that $\mathbb{P}$ dualises $\mathbf{P}$ by the IC Duality Theorem \ref{IC}.
\end{proof}

We conclude with some open problems, which should guide future directions in the goal of a classification of dualisable semigroups.  
\begin{problem}\label{problem1}
When is a finite Clifford semigroup dualisable?
\end{problem}
Clearly, it is necessary that all subgroups have only abelian Sylow subgroups.  But is this sufficient?  A number of families of Clifford semigroups over abelian groups were shown to be dualisable by Nadia Al Dhamri in her thesis \cite{aldthesis}.  Problem \ref{problem1} is also of interest in the monoid signature (and in the inverse semigroup signature, but the inverse is term definable from multiplication in a finite Clifford semigroup, so the inverse semigroup theoretic version coincides with the straight semigroup version).  Even the restriction of Problem \ref{problem1} to commutative semigroups is of interest.  A broader target would be to characterise dualisability for regular semigroups.  Must such a semigroup be a normal band of dualisable groups?

In \cite{DPW}, Davey, Pitkethly and Willard showed that a finite algebra generating a residually large congruence meet semidistributive variety is inherently nondualisable.  Semigroups do not typically generate congruence meet semidistributive varieties, however, all  nondualisable semigroups discovered to date generate residually large varieties and all semigroups generating residually large varieties for which the dualisability question has been resolved, are inherently nondualisable (see Golubov and Sapir \cite{golsap}, Kublanovsky \cite{kub} or McKenzie \cite{mck} for a classification of residually large semigroup varieties, while results in the present article imply all known nondualisability results for finite semigroups).
\begin{problem}\label{problem2}
Is it true that a finite semigroup generating a residually large variety is inherently nondualisable?
\end{problem}
The present article already covers a number of the cases required to complete a solution to Problem \ref{problem2}.  We note also, that all of the currently known nondualisable finite semigroups are now known to be \emph{inherently} nondualisable: the one example left unresolved in \cite{jac03} (see second example on page 488) contains both ${\bf P}$ and ${\bf Q}$ as subsemigroups, and hence is inherently nondualisable by Theorem \ref{thm:PQ}.
\begin{problem}\label{problem3}
Is there a nondualisable but not inherently nondualisable finite semigroup?
\end{problem}

\end{document}